\DeclareMathOperator{\out}{Out}
\DeclareMathOperator{\supp}{Supp}
\DeclareMathOperator{\mcg}{MCG}
\DeclareMathOperator{\vol}{Vol}
\DeclareMathOperator{\LV}{FV}
\newcommand{\R} {\ensuremath {\mathbb{R}}}
\newcommand{\Z} {\ensuremath {\mathbb{Z}}}
\newcommand{\fil}{\ensuremath{\textrm{fill}}}
\renewcommand{\phi}{\varphi}
\newtheorem*{var@theorem}{\var@title}
\newcommand{\newvartheorem}[2]{%
	\newenvironment{var#1}[1]{%
		\def\var@title{#2 \ref{##1}}%
		\begin{var@theorem}}%
			{\end{var@theorem}}}
\newtheorem{lemma}{Lemma}[section]
\newtheorem{thm}[lemma]{Theorem}
\newtheorem{prop}[lemma]{Proposition}
\newtheorem{cor}[lemma]{Corollary}
\newtheorem*{prop*}{Proposition}
\newtheorem{prop_intro}{Proposition}
\newtheorem{quest_intro}[prop_intro]{Question}
\newtheorem{thm_intro}[prop_intro]{Theorem}
\newtheorem{cor_intro}[prop_intro]{Corollary}
\theoremstyle{definition}
\newtheorem{defn_intro}[prop_intro]{Definition}
\title[]{Length functions on mapping class groups and simplicial volumes of mapping tori}
\author{Federica Bertolotti}
\address{Scuola Normale Superiore, Pisa, Italy}
\email[F. Bertolotti]{federica.bertolotti@sns.it}
\author{Roberto Frigerio}
\address{Dipartimento di Matematica, Universit\`a di Pisa, Italy}
\email[R. Frigerio]{roberto.frigerio@unipi.it}
\keywords{Simplicial volume, Stable integral simplicial volume, Filling volume, Length functions, Mapping torus, fibretion over the circle, Mapping class group}
\subjclass{55N10 (Primary), 57S05, 53C23, 57M07 (Secondary)}
\begin{document}
\begin{abstract}
	Let $M$ be a closed orientable manifold. We introduce two numerical invariants, called \emph{filling volumes}, on the mapping class group $\mcg(M)$ of $M$, which are defined in terms of filling norms on the space of singular boundaries on $M$, both with real and with integral coefficients.
	We show that filling volumes are length functions on $\mcg(M)$, we  prove that the real filling volume of a mapping class $f$ is equal to the simplicial volume of the corresponding mapping torus $E_f$, while the integral filling volume of $f$ is not smaller than the stable integral simplicial volume of $E_f$.

	We discuss several vanishing and non-vanishing results for the filling volumes.
	As applications, we show that the hyperbolic volume of $3$-dimensional mapping tori is not subadditive with respect to their monodromy, and  that the real and the integral filling norms on integral boundaries are often non-biLipschitz equivalent.
\end{abstract}

\maketitle

\section{Introduction}
This paper is devoted to the study of two invariants on the mapping class group of a closed orientable manifold. Our invariants (which are in fact the real and the integral version of a unique invariant) are defined in terms of filling norms on the space of singular boundaries, and they turn out to be closely related to the simplicial volume of mapping tori.

Let $R=\R$ or $\Z$, and
let $M$ be a closed orientable $n$-dimensional manifold. Let $C_*(M,R)$ denote the complex of singular chains on $M$ with coefficients in $R$, and
for every $i\in\mathbb{N}$ denote by $Z_i(M,R)\subseteq C_i(M,R)$ (resp.~$B_i(M,R)\subseteq C_i(M,R)$) the subspace of degree-$i$ cycles (resp.~boundaries).
We endow $C_*(M,R)$  with the usual $\ell^1$-norm such that, if $c=\sum_{i\in I} a_i\sigma_i$ is a singular chain written in reduced form, then
\[
	\|c\|_1=\left\| \sum_{i\in I} a_i\sigma_i\right\|=\sum_{i\in I} |a_i|\ .
\]
On the space $B_i(M,R)$ of boundaries there is also defined the \emph{filling norm} $\|\cdot \|_{\fil, R}$ such that, for every $z\in B_i(M,R)$,
\[
	\|z\|_{\fil,R} =\inf \{\|b\|_1\, |\, b\in C_{i+1}(M,R)\, ,\ \partial b=z\}\ .
\]
Recall that $H_n(M,\Z)\cong \Z$ is generated by the \emph{fundamental class} $[M]_\Z$ of $M$, and
that an integral fundamental cycle for $M$ (or $\Z$-fundamental cycle) is just any representative of $[M]_\Z$. We denote by $[M]_\R\in H_n(M,\R)$ the \emph{real} fundamental class of $M$, i.e.~the image of the fundamental class $[M]_\Z$
via the change of coefficient map $H_n(M,\Z)\to H_n(M,\R)$. An $\R$-fundamental cycle (or \emph{real} fundamental cycle) of $M$ is any representative of $[M]_\R$ in $Z_n(M,\R)$.

We denote by $\mcg(M)$ the (positive) mapping class group of $M$, i.e.~the group of homotopy classes of orientation preserving
self-homotopy equivalences of $M$
(in the literature, the mapping class group often denotes the group of isotopy classes of (orientation preserving) self-homeomorphisms;
since there is a natural homomorphism between this last group and the group $\mcg(M)$ as defined above, our invariants are defined also on
isotopy classes of orientation preserving self-homeomorphisms of $M$).

If $f\colon M\to M$ is a map, we denote by $f_*\colon C_*(M,R)\to C_*(M,R)$ the induced map on singular chains.
Observe that $f_*$
is norm non-increasing both with respect to the $\ell^1$-norm,
and (on the subspace of boundaries) with respect to the filling norm.

\begin{defn_intro}\label{L:def}
	Let $f\colon M\to M$ be an orientation preserving self-homotopy equivalence,
	and let $z\in C_n(M,R)$ be an $R$-fundamental cycle for $M$.
	We set
	\[
		\LV_R(f)=\lim_{m\to \infty} \frac{ \|f^m_*(z)-z\|_{\fil,R}}{m}\ ,
	\]
	where $\|\cdot \|_{\fil,R}$ denotes the filling norm on $B_n(M,R)$. (The symbol $\LV$ stands for \emph{filling volume}).
\end{defn_intro}

We will see in Proposition~\ref{initial:prop} that
the invariant $\LV_R(f)$ is well defined, i.e.~the above limit exists, is finite, and  does not depend on the choice of the fundamental cycle $z$.
Moreover, it follows from the very definition that $\LV_\R(f)\leq \LV_\Z(f)$ for every orientation preserving self-homotopy equivalence $f\colon M\to M$.

It turns out that $\LV_R(f)$ only depends on the homotopy class of $f$ (see  Proposition~\ref{homotopy:prop}). With a small abuse,
for every $\varphi\in \mcg(M)$ we will thus denote by $\LV_R(\varphi)$ also the value $\LV_R(f)$, where $f$ is any representative of $\varphi$. In this way, the
invariant $\LV_R$ defines a map
\[
	\LV_R\colon \mcg(M)\to [0,+\infty)\ .
\]

\subsection*{Basic properties of $\LV_R$}
Following~\cite{Ye}, we say that a  \emph{length function} on a group $G$ is a map $l\colon G\to [0,+\infty)$ such that the following conditions hold:
\begin{enumerate}
	\item $l(g^m)=|m|\cdot l(g)$ for every $g\in G$, $m\in\mathbb{Z}$;
	\item $l(ghg^{-1})=l(h)$ for every $g,h\in G$;
	\item $l(gh)\leq l(g)+l(h)$ for every $g,h\in G$ such that $gh=hg$.
\end{enumerate}

\begin{thm_intro}\label{length:thm}
	The map
	\[
		\LV_R\colon \mcg(M)\to \R
	\]
	is a length function.
\end{thm_intro}

The definition above is not the unique (and probably nor the most popular) definition of length function. Indeed, among the properties of length functions, several authors require the
inequality $l(gh)\leq l(g)+l(h)$ to hold for every pair of elements in $G$ (while we require it to hold only for \emph{commuting} elements).
We will see in  Proposition~\ref{non-length:prop} that this further property does not hold in general for our invariants $\LV_\R$ and $\LV_\Z$.

Being a length function, the invariant $\LV_R$ vanishes on every finite-order element of $\mcg(M)$. Using this fact we readily deduce the following:

\begin{cor_intro}\label{Gromov_hyp:cor}
	Let $M$ be a closed connected aspherical and orientable manifold of dimension at least $3$ and  suppose that $\pi_1(M)$ is Gromov hyperbolic.
	Then $\LV_R(f)= 0$ for every orientation preserving self-homotopy equivalence $f \colon M \to  M$.
\end{cor_intro}
Indeed, if $M$ is as in the statement of Corollary~\ref{Gromov_hyp:cor}, then $\out(\pi_1(M))$ is finite~\cite[Theorem~5.4A]{Gromov_hyperbolic}  and $\mcg(M)= \out^+(\pi_1(M ))<\out(\pi_1(M))$
(indeed, it is well known that for every aspherical CW-complex the group of homotopy classes of self-homotopy equivalences is isomorphic to the outer automorphism group of the fundamental group).

Notice that, in dimension not smaller than 3, negatively curved closed connected orientable manifolds satisfy the hypothesis of the corollary above.

\subsection*{The invariant $FV_\R$ and the  simplicial volume of mapping tori}
We  recall the following fundamental:

\begin{defn_intro}
	The \emph{$R$-simplicial volume} of $M$ is
	\[
		\|M\|_R=\inf \{\|z\|_1\, |\, z\ \text{is an}\ R-\text{fundamental cycle for}\ M\}\ .
	\]
\end{defn_intro}

The real simplicial volume $\|M\|_\R$ is just the classical simplicial volume as
defined by Gromov in~\cite{Gromov}, and it is denoted simply by $\|M\|$. Henceforth, when omitting the choice of the coefficients we will always understand that $R=\R$. For example, we will denote simply by
$\LV$, $\|\cdot\|_\fil$, $[M]$ the invariant $\LV_\R$, the norm $\|\cdot \|_{\fil,\R}$, and the real fundamental class $[M]_\R$ of $M$, respectively.

For every homeomorphism $f\colon M\to M$, let us denote by $M\rtimes_{f} S^1$ the mapping torus of $f$,  i.e. the manifold obtained from
$M\times [0,1]$ by identifying $(x,0)$ with $(f(x),1)$ for every $x\in M$. The following result establishes a very neat relationship between $\LV (f)$ and the mapping torus of $f$:

\begin{thm_intro}\label{main:thm}
	Let $M$ be a closed orientable manifold, and let $f\colon M\to M$ be an orientation preserving homeomorphism. Then
	\[
		\LV (f)=\|M\rtimes_{f} S^1\|\ .
	\]
\end{thm_intro}

The inequality $ \|M\rtimes_f S^1\| \leq \LV(f)$ admits a rather simple proof. The proof of  the converse inequality is more involved, and boils down to showing that mapping tori admit efficient fundamental cycles
that are compatible (in a suitable sense) with the fibration (actually, with \emph{any} fixed fibration) of the mapping torus on the circle.

Thanks to Theorem~\ref{main:thm}, the understanding of the invariant $\LV$ can give information on the simplicial volume of mapping tori. For example,
putting together
Theorem~\ref{main:thm} and Proposition~\ref{non-length:prop} we readily deduce the following:

\begin{cor_intro}\label{non-additive:cor}
	Let $\Sigma$ be a closed hyperbolic surface. Then
	there are orientation preserving homeomorphisms $f,g\colon \Sigma\to \Sigma$ such that
	\[
		\|\Sigma\rtimes_{f\circ g} S^1\| >
		\|\Sigma\rtimes_f S^1\|+ \|\Sigma\rtimes_g S^1\|\ .
	\]
\end{cor_intro}

On the other way around, from
Theorem~\ref{main:thm} and the knowledge of the simplicial volume of manifolds that fiber over the circle, one
may deduce many properties of $\LV$.
For example:

\begin{thm_intro}\label{pari}
	Let $n$ be a positive integer, $n\neq 1,3$. Then, there exist a closed orientable $n$-manifold $M$ and a class $\varphi\in \mcg(M)$ such that
	$\LV(\varphi)>0$.
\end{thm_intro}

On the contrary, it is shown in~\cite{Michelletori} that the simplicial volume of any $4$-dimensional closed orientable manifold fibering over the circle vanishes.
This already shows that, if $M$ is a closed orientable $3$-manifold and $\varphi$ is an element of $\mcg(M)$ which may be represented by a homeomorphism, then $\LV(\varphi)=0$.
Moreover, in~\cite{Bertolotti} it is shown that every orientation preserving self-homotopy equivalence of a closed orientable $3$-manifold admits a power homotopic to a homeomorphism.
Building on this two results and on the fact that $\LV_R(\phi^n)= |n|\LV_R(\phi)$ for every $\phi \in \mcg(M)$ and $n \in \Z$, we deduce the following:

\begin{thm_intro}\label{dim3}
	For every closed orientable $3$-dimensional manifold $M$ and every $\varphi\in\mcg(M)$ we have $\LV(\varphi)=0$.
\end{thm_intro}

In contrast, the invariant $\LV_\Z$ does not necessarily vanish on mapping classes of $3$-dimensional manifolds (see Corollary~\ref{ZvsR:cor} below).

Due to the vast literature on the geometry of $3$-manifolds fibering over the circle, in dimension 2 we can collect more information on the invariant $\LV$.
For example, in the case of surfaces
it is possible to completely understand the vanishing or non-vanishing of $\LV$. Indeed, it is well-known that every self-homotopy equivalence of a closed
surface is homotopic to a homeomorphism, and the simplicial volume of mapping tori of hyperbolic surfaces is completely understood
(see e.g.~\cite[Section 2.6]{koji2} for a statement which perfectly fits with our terminology). Hence Theorem~\ref{main:thm} readily implies the following:

\begin{cor_intro}\label{surfaces}
	Let $\varphi\in\mcg(\Sigma_g)$, where $\Sigma_g$ is the closed orientable surface of genus $g\geq 2$. Then $\LV(\varphi)>0$ if and only if $\varphi$ is
	virtually (partially)
	pseudo Anosov, i.e.~if there exist a homeomorphism $f\colon \Sigma_g \to \Sigma_g$ representing $\varphi$ and a subsurface $\Sigma'\subset \Sigma_g$
	such that a power of $f$ leaves $\Sigma'$ invariant and acts as a pseudo Anosov homeomorphism in $\Sigma'$.
\end{cor_intro}

There are plenty of papers relating the hyperbolic volume (hence, via the Proportionality Principle, the simplicial volume) of a 3-manifold fibering over the circle
$\Sigma\rtimes_f S^1$
to other length functions on the mapping class $[f]\in\mcg(\Sigma)$, like the minimal topological entropy, or the translation length with respect to the Teichm\"uller or
the Weil-Petersson distance on Teichm\"uller space (see e.g.~\cite{koji1, koji2, koji3, lackenbypurcell}). In a rather indirect way, these results build a bridge between the invariant $\LV$ and other classical invariants of homeomorphisms of surfaces. For example, Theorem~\ref{main:thm} and the main results of~\cite{koji2, koji3} readily imply the following:

\begin{cor_intro}\label{entropy:cor}
	Let $\Sigma$ be a closed hyperbolic surface, and let $\varphi\in \mcg(\Sigma)$. Then, there exists a constant $C>0$ only depending on the genus of $\Sigma$ such that
	\[
		C^{-1} \|\varphi\|_{WP}\leq \LV (\varphi) \leq C\|\varphi\|_{WP}\
	\]
	and
	\[
		\LV (\varphi) \leq C\|\varphi\|_{T}
	\]
	where $\|\varphi\|_{WP}$ (resp.~$\|\varphi\|_T$) denotes the translation length of the action of $\varphi$ on the Teichm\"uller space of $\Sigma$, with respect to the Weil-Petersson metric
	(resp.~to the Teichm\"uller metric).

	Furthermore, if $\varphi$ is pseudo Anosov, then
	\[
		\LV (\varphi)\leq \frac{ 3\pi |\chi(\Sigma)|}{v_3}  \|\varphi\|_{T}
	\]
	\[
		\LV (\varphi)\leq \frac{3\sqrt{\pi |\chi(\Sigma)|}}{v_3\sqrt{2}}  \|\varphi\|_{WP}
	\]
\end{cor_intro}
Here above, $v_3$ denotes the volume of the (unique up to isometry) regular ideal geodesic simplex in the hyperbolic $3$-space.

It is well known that, for every $\varphi\in \mcg(\Sigma)$, where $\Sigma$ is any hyperbolic surface, the translation length $\|\varphi\|_T$ is equal to the \emph{topological entropy}
$\text{ent}(\varphi)$ of $\varphi$ (see e.g.~\cite{koji2}).
The topological entropy is a well defined topological invariant of self-homeomorphisms of topological spaces, and the topological entropy of a mapping class is the infimum of the topological entropies of its representatives.
In the case of surfaces, Corollary~\ref{entropy:cor} provides an upper bound for the ratio $\LV (\varphi)/\text{ent}(\varphi)$ which only depends on the manifold.
A natural question is whether similar results may hold in higher dimensions. More precisely, we ask here the following:

\begin{quest_intro}
	Let $M$ be a closed orientable $n$-manifold. Does a constant $C>0$ (depending only on $M$) exist such that
	\[
		\LV(\varphi)\leq C\cdot \text{ent} (\varphi)
	\]
	for every $\varphi\in\mcg(M)$?
\end{quest_intro}

When $\Sigma$ is a surface, the quantity $2\pi |\chi(\Sigma)|\text{ent}(\varphi)=\pi \|\Sigma\|\cdot \text{ent}(\varphi)$ is usually called
\emph{normalized entropy} of $\varphi$, and Corollary~\ref{entropy:cor} states that, for surfaces,  the ratio between $\LV (\varphi)$ and the normalized entropy of $\varphi$ is bounded from above by a universal constant. In a previous version of the paper, we asked whether such an estimate (with a constant only depending on the dimension) could hold in higher dimension. However, the authors of~\cite{fiberS}
pointed out to us that this cannot be the case. In fact, should a constant
$C>0$ exist such that
\[
	\LV (\varphi)\leq C\cdot \|M\|\cdot \text{ent} (\varphi)
\]
for every $\varphi\in\mcg(M)$ and every $n$-dimensional orientable closed manifold $M$, we would have
$\|M\rtimes_f S^1\|=0$ for every mapping torus such that $\|M\|=0$. However, in the proof of~\cite[Theorem A and Theorem 2.2]{fiberS} examples are exhibited of
mapping tori $M\rtimes_f S^1$ with $\|M\rtimes_f S^1\|>0$ and $M$ rationally inessential (hence, $\|M\|=0$).

\subsection*{The invariant $\LV_\Z$ and simplicial volumes of mapping tori}
After Gromov's seminal work,
a number of interesting variations of the classical simplicial volume have been studied by several authors. For example, a wide interest has been devoted to the
\emph{stable integral simplicial volume} $\|M\|_\Z^\infty$ of $M$, which is  the infimum of the values of $\|N\|_\Z/d$, as $\|N\|$ varies among the $d$-sheeted coverings of $M$,
and $d$ varies in $\mathbb{N}$. We establish here the following:

\begin{thm_intro}\label{mainZ:thm}
	Let $M$ be a closed orientable manifold, and let $f\colon M\to M$ be a homeomorphism. Then
	\[
		\LV_\Z(f)\geq \|M\rtimes_{f} S^1\|_\Z^{\infty}\ .
	\]
	Moreover, for every $n \geq 2$ there exist a closed $n$-manifold $M$ and a homeomorphism $f \colon M \to M$ such that
	\[
		\LV_\Z(f)>0\, ,\qquad  \|M\rtimes_{f} S^1\|_\Z^{\infty}=0\ .
	\]
\end{thm_intro}

Since $\|M\rtimes_{f} S^1\|_\Z^{\infty}\geq \|M\rtimes_{f} S^1\|=\LV_\R(f)$,
a consequence of the theorem above is the following:

\begin{cor_intro}\label{ZvsR:cor}
	For every $n \geq 2$ there exist an $n$-manifold $M$ and an element $\phi\in\mcg(M)$ such that $\LV_\R(\phi)=0$, while $\LV_\Z(\phi)\neq 0$.
\end{cor_intro}

From Corollary~\ref{ZvsR:cor} it is easily deduced the following:

\begin{cor_intro}\label{filling:cor}
	For every $n \geq 2$ there exists a $n$-manifold $M$ such that
	the restriction to $B_n(M,\Z)$ of the filling norm on $B_n(M,\R)$  is \emph{not} equivalent to
	the integral filling norm. In other words, for every $\varepsilon>0$ there exists an integral boundary $c\in B_n(M,\Z)$ such that
	\[
		\frac{\|c\|_{\fil,\R}}{\|c\|_{\fil,\Z} }<\varepsilon\ .
	\]
\end{cor_intro}

In a different context, the fact that coefficients make a difference when computing filling norms has been recently pointed out in~\cite{Manin}.

In order to prove the second statement of Theorem~\ref{mainZ:thm} above for $n \geq 3$ we need to exploit another variation of the simplicial volume:

\begin{defn_intro}[{\cite{Loh:weight}}]
	The \emph{weightless} $R$-simplicial volume is defined by
	\[
		\|M\|_{(R)}=\min \left\{m\in\mathbb{N}\ \big|\  \sum_{i=1}^m a_i \sigma_i\ \text{is an $R$-fundamental cycle of}\ M\right\}\ .
	\]
\end{defn_intro}
Roughly speaking, weightless simplicial volumes count only the number of simplices in a chain and ignore the (absolute value of the) coefficients.
It is proved in~\cite[Corollary 4.5]{Loh:weight} that the weightless simplicial volume of an even-dimensional hyperbolic manifold $M$ (with coefficients in any principal ideal domain)
is bounded from below by the volume of the manifold, up to a constant depending only on the dimension.
In Section~\ref{integral:sec} we extend this result to hyperbolic manifolds of any dimension by showing the following:

\begin{prop_intro}\label{hyp:easy:prop}
	Let $M$ be an $n$-dimensional closed orientable hyperbolic manifold, and let $R=\R,\Z$ (in fact, $R$ could be any principal ideal domain). Then
	\[
		\|M\|_{(R)}\geq \|M\|=\frac{\vol(M)}{v_n}\ ,
	\]
	where $v_n$ denotes the volume of the (unique up to isometry) regular ideal geodesic simplex in the hyperbolic $n$-space.
\end{prop_intro}


\subsection*{Plan of the paper}
In Section \ref{length:sec} we prove that the filling volume $\LV_R$ is a well defined length function on the mapping class group of  closed oriented manifolds.
We also state and prove basic properties of this map, as the non-subadditivity on hyperbolic surfaces and the fact that $\LV_R$ is constantly zero on manifolds satisfying
the so--called \emph{uniform booundary condition}  in top degree.

Section \ref{real:sec} is mainly devoted to the proof of Theorem \ref{main:thm}, stating that the filling volume of an orientation preserving self-homeomorphism $f$ on an $n$-manifold is equal to the
simplicial volume of the associated mapping torus $M \rtimes_f S^1$. We then deduce Theorem~\ref{pari}  from Theorem~\ref{main:thm}.

In Section \ref{integral:sec} we focus our attention on the integral filling volume $\LV_{\Z}$. We prove that this quantity, for a homeomorphism $f$, is not smaller then the stable integral simplicial volume of the mapping torus relative to $f$.
Finally, with the support of an inequality involving the weightless simplicial volume, we exhibit some examples in which the inequality between the integral filling volume and the stable integral simplicial volume of the
corresponding mapping torus is strict.

\subsection*{Acknowledgement}
The authors thank Bruno Martelli for useful conversations about the complexity and the stable complexity of 3-manifolds fibering over the circle.
We are indebted to Thorben Kastenholz and Jens Reinhold for pointing out to us the existence of manifolds with vanishing simplicial volume admitting
a mapping torus with positive simplicial volume.
We would like to thank Pierre Py and Valentina Disarlo for useful conversations.

\section{The invariant $\LV_R$ as a length function}\label{length:sec}
This section is mainly devoted to the proof of Theorem~\ref{length:thm}, which states that $\LV_R$ is a well defined length function on the mapping class group of closed orientable manifolds.

Throughout the whole section, we fix a closed orientable $n$-manifold $M$.
Let  $f\colon M\to M$ be an orientation preserving self-homotopy equivalence, let $z\in C_n(M,R)$ be an $R$-fundamental cycle for $M$, and recall that
\[
	\LV_R(f)=\lim_{m\to \infty} \frac{ \|f^m_*(z)-z\|_{\fil,R}}{m}\ .
\]

We begin with the following:
\begin{prop}\label{initial:prop}
	The invariant $\LV_R(f)$ is well defined, i.e.~the above limit exists, is finite, and  does not depend on the choice of the fundamental cycle $z$. Moreover,
	\[
		\LV_R(f)=\inf\left\{ \frac{ \|f^m_*(z)-z\|_{\fil,R}}{m}\ \big|\ z\ \text{fundamental cycle for}\ M,\, m\in\mathbb{Z}\right\}\ .
	\]
\end{prop}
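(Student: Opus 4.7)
The plan is to establish subadditivity of the sequence $a_m=\|f^m_*(z)-z\|_{\fil,R}$ and then apply Fekete's lemma to conclude that the limit exists, is finite, and equals the infimum; independence of $z$ will then follow from a boundedness argument.

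First I would check that each $a_m$ is finite. Since $f$ is orientation preserving, $f_*$ fixes the fundamental class, so $f^m_*(z)$ is again an $R$-fundamental cycle for every $m\ge 0$. Hence $f^m_*(z)-z\in B_n(M,R)$, and in particular it admits at least one filling, so $a_m<+\infty$. In fact $a_1$ provides a uniform linear upper bound, which we obtain in the next step.

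Next I would prove the telescoping inequality $a_{m+k}\le a_m+a_k$. Write
\[
f^{m+k}_*(z)-z \;=\; f^m_*\bigl(f^k_*(z)-z\bigr)+\bigl(f^m_*(z)-z\bigr).
\]
Since $f_*$ commutes with the boundary operator and is non-increasing with respect to the filling norm on boundaries (a fact recalled in the paper just before Definition~\ref{L:def}), we get
\[
\bigl\|f^m_*\bigl(f^k_*(z)-z\bigr)\bigr\|_{\fil,R}\;\le\;\|f^k_*(z)-z\|_{\fil,R}=a_k,
\]
which gives subadditivity. In particular $a_m\le m\,a_1$, so $a_m/m$ is bounded. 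Fekete's subadditive lemma then yields
\[
\lim_{m\to\infty}\frac{a_m}{m}\;=\;\inf_{m\ge 1}\frac{a_m}{m}\;<\;+\infty,
\]
establishing existence, finiteness, and the equality with the infimum over $m$ (for the fixed cycle $z$).

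Finally I would show independence of the fundamental cycle. If $z'$ is another $R$-fundamental cycle, then $w:=z'-z\in B_n(M,R)$, and the decomposition
\[
f^m_*(z')-z' \;=\; \bigl(f^m_*(z)-z\bigr)+\bigl(f^m_*(w)-w\bigr)
\]
together with $\|f^m_*(w)\|_{\fil,R}\le\|w\|_{\fil,R}$ gives
\[
\bigl|\,\|f^m_*(z')-z'\|_{\fil,R}-\|f^m_*(z)-z\|_{\fil,R}\,\bigr|\;\le\;2\,\|w\|_{\fil,R}.
\]
Dividing by $m$ and letting $m\to\infty$, the correction term vanishes, so the limit defining $\LV_R(f)$ is the same for $z$ and for $z'$. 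Combining this with the previous step yields the infimum characterization over all pairs $(z,m)$. The main (mild) obstacle is just making sure that $f^k_*(z)-z$ is a genuine $R$-boundary before invoking the contracting property of $f_*$; this is where orientation preservation is essential.
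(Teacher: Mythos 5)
Your proposal is correct and follows essentially the same route as the paper: subadditivity of $a_m=\|f^m_*(z)-z\|_{\fil,R}$ via the telescoping decomposition and the fact that $f_*$ does not increase the filling norm, then Fekete's lemma, then a bounded-perturbation argument for independence of $z$. The only cosmetic difference is in the last step, where you bound the discrepancy by $2\|z'-z\|_{\fil,R}$ using the triangle inequality for the filling norm, while the paper fixes a filling $c$ of $z-z'$ and bounds by $2\|c\|_1$; these are equivalent.
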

\begin{proof}
	Let $z$ be a fixed fundamental cycle for $M$, and let us set
	$a_m=\|f^m_*(z)-z\|_{\fil,R}$. Then, for every $m,m'\in\mathbb{N}$, we have
	\begin{align*}
		a_{m+m'} & =\|f^{m+m'}_*(z)-z\|_{\fil,R} \leq \|f^{m+m'}_*(z)-f^m_*(z)\|_{\fil,R} + \|f^{m}_*(z)-z\|_{\fil,R} \\
		         & = \|f^{m}_*(f^{m'}_*(z)-z)\|_{\fil,R} + \|f^{m}_*(z)-z\|_{\fil,R}                                  \\
		         & \leq \|f^{m'}_*(z)-z\|_{\fil,R} + \|f^{m}_*(z)-z\|_{\fil,R}=a_{m'}+a_{m}.
	\end{align*}
	In other words, the sequence $a_m$ is subadditive, hence, by Fekete's Lemma, the limit $\lim_{m\to +\infty} a_m/m$ exists
	(and is equal to $\inf_{m\in\mathbb{N}} a_m/m$).

	Let now $z,z'$ be $R$-fundamental cycles for $M$. Then $z-z'=\partial c$ for some $c\in C_{n+1}(M,R)$. Hence, for every $m\in\mathbb{N}$,
	$f^m_*(z)-f^m_*(z')=\partial f^m_*(c)$, and
	\[
		(f^m_*(z)-z)-(f^m_*(z')-z')=\partial (f^m_*(c)-c)\ .
	\]
	Therefore,
	\[
		\big| \|f^m_*(z)-z\|_{\fil,R}- \|f^m_*(z')-z'\|_{\fil,R} \big|\leq \|f^m_*(c)-c\|_1\leq 2\|c\|_1\ .
	\]
	As a consequence we get
	\[
		\left| \frac{\|f^m_*(z)-z\|_{\fil,R}}{m}- \frac{\|f^m_*(z')-z'\|_{\fil,R}}{m} \right|\leq  2\frac{\|c\|_1}{m}\ ,
	\]
	hence
	\[
		\lim_{m\to \infty} \frac{\|f^m_*(z)-z\|_{\fil,R}}{m}=\lim_{m\to \infty}\frac{\|f^m_*(z')-z'\|_{\fil,R}}{m}\ .
	\]
	The last statement of the proposition readily follows from Fekete's Lemma.

\end{proof}

The following proposition shows the invariant $\LV_R$ is well defined on mapping classes.

\begin{prop}\label{homotopy:prop}
	Let $f,g\colon M\to M$ be homotopic orientation preserving self-homotopy equivalences. Then
	\[
		\LV_R(f)=\LV_R(g)\ .
	\]
\end{prop}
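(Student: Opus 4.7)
The plan is to exploit the standard prism chain homotopy to bound the $\ell^1$-distance between $f^m_*(z)$ and $g^m_*(z)$ by a constant independent of $m$, and then divide by $m$.

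First I would observe that if $f \simeq g$ then $f^m \simeq g^m$ for every $m \in \N$. Fix any homotopy $F_m \colon M \times [0,1] \to M$ from $f^m$ to $g^m$. Combining $F_m$ with the standard prism operator $P \colon C_n(M,R) \to C_{n+1}(M \times [0,1], R)$ that triangulates $\Delta^n \times [0,1]$ into $n+1$ simplices with $\pm 1$ coefficients, one obtains a chain homotopy $H_m \colon C_*(M,R) \to C_{*+1}(M,R)$ satisfying
\[
\partial H_m + H_m \partial = g^m_* - f^m_*
\]
and, crucially, such that $\|H_m(\sigma)\|_1 \leq n+1$ for every singular simplex $\sigma$ of degree $n$. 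Hence, for the fixed fundamental cycle $z$, we have $\|H_m(z)\|_1 \leq (n+1)\|z\|_1$, a bound that does not depend on $m$.

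Applying this chain homotopy to the cycle $z$ (so $H_m \partial z = 0$) yields $g^m_*(z) - f^m_*(z) = \partial H_m(z)$. Consequently
\[
\|g^m_*(z) - f^m_*(z)\|_{\fil,R} \leq \|H_m(z)\|_1 \leq (n+1)\|z\|_1\ ,
\]
and the triangle inequality for $\|\cdot\|_{\fil,R}$ gives
\[
\bigl| \|f^m_*(z) - z\|_{\fil,R} - \|g^m_*(z) - z\|_{\fil,R} \bigr| \leq (n+1)\|z\|_1\ .
\]
Dividing by $m$ and letting $m \to \infty$, both sides produce $\LV_R(f)$ and $\LV_R(g)$ respectively, while the right-hand side tends to $0$. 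Thus $\LV_R(f) = \LV_R(g)$.

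There is no real obstacle here beyond checking that the prism operator has a combinatorial bound independent of the ambient maps $f^m, g^m$; this is the point where the argument would otherwise break, since a naive bound might depend on $m$ through the possibly complicated homotopy between $f^m$ and $g^m$. The prism construction sidesteps this: the $\|\cdot\|_1$-norm of $P(\sigma)$ is the number of top-dimensional simplices in the triangulation of $\Delta^n \times [0,1]$, which is $n+1$, regardless of how wild $F_m$ is. The argument works identically for $R = \R$ and $R = \Z$ since the prism decomposition is integral.
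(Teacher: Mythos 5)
Your argument is correct and coincides with the paper's proof: both use that a homotopy $f^m \simeq g^m$ induces a chain homotopy whose degree-$n$ operator norm is at most $n+1$ (the paper quotes this as well known; you justify it via the prism decomposition), giving the $m$-independent bound $\|f^m_*(z)-g^m_*(z)\|_{\fil,R}\leq (n+1)\|z\|_1$, after which dividing by $m$ finishes the proof.
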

\begin{proof}
	Let $T_*\colon C_*(M,R)\to C_{*+1}(M,R)$ be the chain homotopy associated to a homotopy between any two homotopic self-homotopy equivalences of $M$. It is well known
	that the operator norm of $T_i$ (with respect to $\ell^1$-norms) is not bigger than $i+1$. Let us fix a fundamental cycle $z$ of $M$ and let $m\in\mathbb{N}$. Since $f$ is homotopic to $g$, also $f^m$ is homotopic to $g^m$, hence
	there is a chain homotopy $T_*\colon C_*(M,R)\to C_{*+1}(M,R)$ such that
	\begin{align*}
		\|(f^m_*(z)-z)-(g^m_*(z)-z)\|_{\fil,R} & =\|f^m_*(z)-g^m_*(z)\|_{\fil,R} \\ &=\|\partial T_n(z)\|_{\fil,R}\leq \|T_n(z)\|_1\leq (n+1)\|z\|_1\ ,
	\end{align*}
	which implies
	\[
		\left| \frac{\|f^m_*(z)-z\|_{\fil,R}}{m}- \frac{\|g^m_*(z)-z\|_{\fil,R}}{m} \right|\leq  \frac{(n+1)\|z\|_1}{m}\ ,
	\]
	whence the conclusion.
\end{proof}

The following proposition concludes the proof of Theorem~\ref{length:thm}.

\begin{prop}
	Let $\varphi,\psi\in \mcg(M)$. Then:
	\begin{enumerate}
		\item $\LV_R(\varphi^m)=|m|\cdot \LV_R(\varphi)$ for every  $m\in\mathbb{Z}$;
		\item $\LV_R(\varphi\psi\varphi^{-1})=\LV_R(\psi)$;
		\item if $\varphi\psi=\psi\varphi$, then $\LV_R(\varphi\psi)\leq \LV_R(\varphi)+\LV_R(\psi)$.
	\end{enumerate}
\end{prop}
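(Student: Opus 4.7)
The plan is to prove the three properties in order, exploiting two fundamental facts: (a) for every map $f$, the induced map $f_*$ is non-increasing with respect to both $\|\cdot\|_1$ and $\|\cdot\|_{\fil,R}$; and (b) any homotopy between two self-maps of $M$ induces a chain homotopy $T$ whose operator norm in degree $n$ is at most $n+1$, so $\|T(z)\|_1\leq (n+1)\|z\|_1$ regardless of which homotopy is used. This makes the ``homotopy correction terms'' uniformly bounded, which is what permits dividing by $m$ and passing to the limit.

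For property (1), I would first handle $m\geq 0$. Fix a representative $f$ of $\varphi$; since $(f^m)^k=f^{mk}$, the definition gives
\[
\LV_R(\varphi^m)=\lim_{k\to\infty}\frac{\|f^{mk}_*(z)-z\|_{\fil,R}}{k}=m\cdot\lim_{k\to\infty}\frac{\|f^{mk}_*(z)-z\|_{\fil,R}}{mk}=m\LV_R(\varphi),
\]
since the full limit exists by Proposition~\ref{initial:prop} and hence equals any subsequence limit. For negative exponents it suffices to show $\LV_R(\varphi^{-1})=\LV_R(\varphi)$. Let $g$ be a homotopy inverse of $f$; then $f^m g^m\simeq \mathrm{id}$ by induction from $fg\simeq \mathrm{id}$, so there is a chain homotopy $T_m$ with $f^m_*g^m_*(z)-z=\partial T_m(z)$ and $\|T_m(z)\|_1\leq (n+1)\|z\|_1$. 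Rewriting
\[
f^m_*(g^m_*(z)-z)=-(f^m_*(z)-z)+\partial T_m(z),
\]
and using that $f^m_*$ is $\|\cdot\|_{\fil,R}$-non-increasing, I obtain $\|f^m_*(z)-z\|_{\fil,R}\leq \|g^m_*(z)-z\|_{\fil,R}+(n+1)\|z\|_1$. Dividing by $m$ yields $\LV_R(\varphi)\leq \LV_R(\varphi^{-1})$, and symmetry closes the argument.

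For property (2), let $f,h$ represent $\varphi,\psi$ and $g$ be a homotopy inverse of $f$, so $fhg$ represents $\varphi\psi\varphi^{-1}$. Iterating $gf\simeq \mathrm{id}$ gives $(fhg)^m\simeq fh^m g$, and fact (b) yields $\|(fhg)^m_*(z)-(fh^m g)_*(z)\|_{\fil,R}\leq (n+1)\|z\|_1$. Set $z'=g_*(z)$, which is a fundamental cycle because $g$ represents the orientation preserving class $\varphi^{-1}$. Using $fg\simeq \mathrm{id}$ once more,
\[
\|(fh^m g)_*(z)-z\|_{\fil,R}\leq \|f_*(h^m_*(z')-z')\|_{\fil,R}+(n+1)\|z\|_1\leq \|h^m_*(z')-z'\|_{\fil,R}+(n+1)\|z\|_1.
\]
Combining the two estimates, dividing by $m$, and invoking Proposition~\ref{initial:prop} to evaluate $\LV_R(\psi)$ using the fundamental cycle $z'$, I get $\LV_R(\varphi\psi\varphi^{-1})\leq \LV_R(\psi)$; conjugating by $\varphi^{-1}$ instead of $\varphi$ yields the reverse inequality.

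For property (3), assume $\varphi\psi=\psi\varphi$ and pick representatives $f,h$, so $fh\simeq hf$. By iteration this gives $(fh)^m\simeq f^m h^m$ for every $m$, so by fact (b) the chains $(fh)^m_*(z)$ and $f^m_* h^m_*(z)$ differ by a boundary of $\ell^1$-norm at most $(n+1)\|z\|_1$. Splitting
\[
f^m_* h^m_*(z)-z=f^m_*(h^m_*(z)-z)+(f^m_*(z)-z)
\]
and using that $f^m_*$ is filling-norm non-increasing produces
\[
\|(fh)^m_*(z)-z\|_{\fil,R}\leq \|h^m_*(z)-z\|_{\fil,R}+\|f^m_*(z)-z\|_{\fil,R}+(n+1)\|z\|_1;
\]
dividing by $m$ and letting $m\to\infty$ gives $\LV_R(\varphi\psi)\leq \LV_R(\varphi)+\LV_R(\psi)$. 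The main subtlety in the whole argument is the conjugation step, where one must switch the base fundamental cycle from $z$ to $z'=g_*(z)$ in order to match the definition of $\LV_R(\psi)$: this switch is legitimate precisely because $g$ is orientation preserving and because Proposition~\ref{initial:prop} guarantees independence of the fundamental cycle.
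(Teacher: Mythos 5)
Your proposal is correct and follows essentially the same route as the paper's proof: positive powers via the definition and existence of the limit, inverses and conjugates via the uniformly bounded chain homotopies ($\|T_n(z)\|_1\leq (n+1)\|z\|_1$) together with the fact that $f_*$ is filling-norm non-increasing, and part (3) via the homotopy $(fh)^m\simeq f^m h^m$. The only differences are cosmetic (you prove $\LV_R(\varphi)\leq\LV_R(\varphi^{-1})$ rather than the reverse direction and appeal to symmetry, and you make explicit the switch of fundamental cycle to $z'=g_*(z)$, which the paper also performs).
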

\begin{proof}
	Let $f,g\colon M\to M$ be representatives of $\varphi,\psi$, respectively, and let $z$ be an $R$-fundamental cycle for $M$.

	(1): The fact that $\LV_R(f^m)=m \cdot \LV_R(f)$ for every $m\in\mathbb{N}$ is a direct consequence of the definition of $\LV_R$, hence we only need to prove
	that, if $h$ is a homotopy inverse of $f$, then  $\LV_R(h)=\LV_R(f)$. Of course, it is sufficient to show that $\LV_R(h)\leq \LV_R(f)$.

	Let $m\in\mathbb{N}$ be fixed, and let $T_*\colon C_*(M,R)\to C_{*+1}(M,R)$ be the chain homotopy induced by a homotopy between $h^m\circ f^m$ and the identity of $M$.
	As observed in the proof of the previous proposition, in degree $n$ the operator norm of $T_*$ is not bigger than $n+1$. Therefore,
	\begin{align*}
		\|h_*^{m}(z)-z\|_{\fil,R} & =\|h_*^{m}(z)-h^m_*(f^m_*(z))-(z-h^m_*(f^m_*(z)))\|_{\fil,R}                \\
		                          & \leq \|h_*^{m}(z)-h^m_*(f^m_*(z))\|_{\fil,R}+\|z-h^m_*(f^m_*(z))\|_{\fil,R} \\
		                          & \leq \|z-f^m_*(z)\|_{\fil,R}+\|\partial T_n(z)\|_{\fil,R}                   \\
		                          & \leq \|f^m_*(z)-z\|_{\fil,R}+\|T_n(z)\|_{1}                                 \\ &\leq \|f^m_*(z)-z\|_{\fil,R}+(n+1)\|z\|_{1}\ .
	\end{align*}
	By dividing this inequality by $m$ and taking the limit as $m\to \infty$ we get $\LV_R(h)\leq \LV_R(f)$, as desired.

	(2): Let $h$ be a homotopy inverse of $f$, set $z'=h_*(z)$, and fix $m\in\mathbb{N}$. Observe that $(\varphi\psi\varphi^{-1})^m=\varphi\psi^m\varphi^{-1}$ is represented by the map
	$f\circ g^m\circ h$, and let $T_*\colon C_*(M,R)\to C_{*+1} (M,R)$ be a chain homotopy
	(of norm at most $n+1$ in degree $n$)
	between $f_*\circ h_*$ and the identity of $C_*(M,R)$.
	We have
	\begin{align*}
		\|f_*(g_*^m(h_*(z)))-z\|_{\fil,R} & = \|f_*(g_*^m(h_*(z)))-f_*(h_*(z))+f_*(h_*(z))-z\|_{\fil,R}          \\
		                                  & \leq \|f_*(g_*^m(z'))-f_*(z')\|_{\fil,R}+\|f_*(h_*(z))-z)\|_{\fil,R} \\
		                                  & \leq \|f_*(g_*^m(z')-z')\|_{\fil,R}+ \|\partial T_n(z)\|_{\fil,R}    \\
		                                  & \leq \|g_*^m(z')-z'\|_{\fil,R}+ \|T_n(z)\|_1                         \\
		                                  & \leq \|g_*^m(z')-z'\|_{\fil,R}+ (n+1)\|z\|_1\ ,
	\end{align*}
	hence
	\begin{align*}
		\LV_R(\varphi\psi\varphi^{-1}) & =\lim_{m\to +\infty} \frac{\|(fg^m h)_*(z)-z\|_{\fil,R} }{m} \\ &\leq \lim_{m\to +\infty} \frac{\|g_*^m(z')-z'\|_{\fil,R} }{m}+ \lim_{m\to +\infty} \frac{(n+1)\|z\|_1}{m}\\ &=\LV_R(g)+0=\LV_R(\psi)\ .
	\end{align*}
	The very same argument may be exploited to show that
	\[\LV_R(\psi) = \LV_R(\varphi^{-1}(\varphi\psi\varphi^{-1})\varphi)\leq \LV_R(\varphi\psi \varphi^{-1})\ ,\]
	whence the conclusion.

	(3): Since $\varphi\psi=\psi\varphi$, for every $m\in\mathbb{N}$ the map  $(fg)^m$ is homotopic to $f^mg^m$. If $T_*\colon C_*(M,R)\to C_{*+1}(M,R)$ is a chain homotopy
	between $(fg)_*^m$ and $f^m_* g^m_*$, then we have
	\begin{align*}
		\|(fg)_*^m(z)-z\|_{\fil,R} & =\|(fg)_*^m(z)-f^m_*(g^m_*(z))+f^m_*(g^m_*(z))-z\|_{\fil,R}                          \\
		                           & \leq \|(fg)_*^m(z)-f^m_*(g^m_*(z))\|_{\fil,R}+\|f^m_*(g^m_*(z))-z\|_{\fil,R}         \\
		                           & \leq \|\partial T_{n}(z)\|_{\fil,R}+\|f^m_*(g^m_*(z))-f^m_*(z)+f^m_*(z)-z\|_{\fil,R} \\
		                           & \leq \| T_n (z)\|_1+ \|f^m_*(g^m_*(z)-z)\|_{\fil,R}+\|f^m_*(z)-z\|_{\fil,R}          \\
		                           & \leq (n+1)\|z\|_1+\|g^m_*(z)-z\|_{\fil,R}+\|f^m_*(z)-z\|_{\fil,R}\ .
	\end{align*}
	By dividing by $m$ both sides of this inequality and taking the limit as $m\to+\infty$ we get $\LV_R(fg)\leq \LV_R(f)+\LV_R(g)$, as desired.
\end{proof}

One may wonder whether the inequality $\LV_R(fg)\leq \LV_R(f)+\LV_R(g)$ could hold for any pair of orientation preserving self-homotopy equivalences of $M$. However, this is not the case:

\begin{prop}\label{non-length:prop}
	Let $R=\R$ or $R=\Z$, and let $\Sigma$ be a hyperbolic surface. Then, there exist elements $\varphi,\psi\in\mcg (\Sigma)$ such that
	\[
		\LV_R(\varphi\psi)>\LV_R(\varphi)+\LV_R(\psi)\ .
	\]
\end{prop}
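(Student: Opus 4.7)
My plan is to exhibit two finite-order mapping classes $\varphi,\psi\in\mcg(\Sigma)$ whose product $\varphi\psi$ is pseudo-Anosov, and then to conclude by combining Theorem~\ref{length:thm} with Theorem~\ref{main:thm}. The key reduction is that any length function on a group automatically vanishes on torsion: if $g^n=\mathrm{id}$, then property~(1) of Theorem~\ref{length:thm} gives $n\cdot l(g)=l(g^n)=l(\mathrm{id})=0$, hence $l(g)=0$. So once $\varphi$ and $\psi$ are chosen of finite order, we automatically have $\LV_R(\varphi)=\LV_R(\psi)=0$ for both $R=\R$ and $R=\Z$, and it only remains to verify $\LV_R(\varphi\psi)>0$.

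\textbf{Producing the torsion pair.} The existence of two finite-order elements of $\mcg(\Sigma)$ with pseudo-Anosov product is a classical fact about mapping class groups. A concrete construction goes via Veech groups: whenever $(\Sigma,\omega)$ is a (half-)translation structure whose Veech group $\Gamma\subset\mathrm{PSL}(2,\R)$ is a cocompact lattice with torsion (e.g.\ a hyperbolic $(p,q,r)$ triangle group realized as a Veech group), an elementary trace computation in $\Gamma$ exhibits elliptic elements, or suitable powers of elliptic generators, whose product is hyperbolic; under the embedding $\mathrm{Aff}(\Sigma,\omega)\hookrightarrow\mcg(\Sigma)$, these correspond to finite-order mapping classes with pseudo-Anosov product. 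For an arbitrary closed hyperbolic surface $\Sigma_g$, one can alternatively combine classical torsion-generation results for $\mcg(\Sigma_g)$ (Wajnryb, Brendle--Farb, Korkmaz) with the Tits alternative and the genericity of pseudo-Anosovs inside non-elementary subgroups of $\mcg$ to produce the required pair.

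\textbf{Conclusion.} Given such $\varphi,\psi$, since $\varphi\psi$ is pseudo-Anosov, Thurston's hyperbolization theorem ensures that $\Sigma\rtimes_{\varphi\psi}S^1$ is a closed hyperbolic $3$-manifold, whence $\|\Sigma\rtimes_{\varphi\psi}S^1\|>0$. By Theorem~\ref{main:thm} this gives
\[
    \LV_\R(\varphi\psi)=\|\Sigma\rtimes_{\varphi\psi}S^1\|>0,
\]
and since $\LV_\Z\geq\LV_\R$ (as noted right after Definition~\ref{L:def}) also $\LV_\Z(\varphi\psi)>0$. Together with $\LV_R(\varphi)=\LV_R(\psi)=0$ this yields
\[
    \LV_R(\varphi\psi)>0=\LV_R(\varphi)+\LV_R(\psi)
\]
for both $R=\R$ and $R=\Z$, as desired. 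The forward reference to Theorem~\ref{main:thm} is harmless, since its proof in Section~\ref{real:sec} does not use the present proposition.

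\textbf{Main obstacle.} The only non-formal ingredient is the existence of the torsion pair with pseudo-Anosov product; the rest is a mechanical application of the length-function property together with Theorem~\ref{main:thm} and Thurston's hyperbolization theorem. I regard the Veech-group route as the cleanest way to produce an explicit such pair, while the general-$\Sigma$ case requires invoking external results on the structure of $\mcg(\Sigma)$.
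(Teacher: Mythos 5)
Your proof is correct in outline but follows a genuinely different route from the paper. The paper argues by contradiction: if $\LV_R$ were subadditive on all pairs, it would be a length function in the stronger sense of~\cite{poly}, hence would factor through $H_1(\mcg(\Sigma),\Z)$, which is finite (genus $2$) or trivial (genus $\geq 3$); so $\LV_R$ would vanish identically, contradicting $\LV_\Z\geq\LV_\R=\|\Sigma\rtimes_f S^1\|>0$ for $f$ pseudo-Anosov (again via the forward reference to Theorem~\ref{main:thm}). Your argument instead produces an explicit witness pair: two torsion classes, on which any length function vanishes, whose product is pseudo-Anosov. Both proofs ultimately rest on Theorem~\ref{main:thm} for positivity on pseudo-Anosov classes; what differs is the external input. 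The paper imports the factorization theorem of~\cite{poly} plus (near-)perfectness of $\mcg(\Sigma_g)$ and is non-constructive; you import the existence of a torsion pair with pseudo-Anosov product, which yields a concrete pair $(\varphi,\psi)$. That existence is a true classical fact, but it is the one step you state rather loosely: to make it airtight you should pin down a single construction valid for every genus $g\geq 2$, e.g.\ the regular $(4g+2)$-gon translation surface, whose affine group contains an elliptic involution $a$; for suitable $b$ in the Veech group, $a$ and $bab^{-1}$ are finite-order classes whose derivatives are rotations by $\pi$ about distinct points of $\mathbb H^2$, so their product has hyperbolic derivative and is therefore pseudo-Anosov. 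The vaguer alternative you sketch (torsion generation plus the Tits alternative plus genericity) does not obviously produce a product of exactly \emph{two} torsion elements that is pseudo-Anosov, so I would not rely on it. With the Veech-group construction made precise, your proof is complete and arguably more informative than the paper's, at the cost of heavier input from Teichm\"uller theory.
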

\begin{proof}
	Suppose by contradiction the inequality $\LV_R(\varphi\psi)\leq\LV_R(\varphi)+\LV_R(\psi)$ holds for every $\varphi,\psi \in \mcg (\Sigma)$. Then the function $\LV_R$ would be a length function
	according to the definition given, e.g., in~\cite{poly}. However, it is shown in~\cite{poly} that such a function would then factor through the abelianization
	$H_1(\mcg(\Sigma),\mathbb{Z})$ of $\mcg(\Sigma)$. But $H_1(\mcg(\Sigma),\mathbb{Z})$ is finite if the genus of $\Sigma$ is equal to $2$, or zero otherwise, hence
	we would have $\LV_R(\varphi)=0$ for every $\varphi\in\mcg(\Sigma)$. Since the mapping torus of a pseudo-Anosov homeomorphism $f$ of $\Sigma$ is hyperbolic
	(hence, it has positive simplicial volume) this would contradict the fact that
	\[
		0<\|M\rtimes_f S^1\|= \LV_\R([f])\leq \LV_\Z ([f])
	\]
	by Theorem~\ref{main:thm} below. 
\end{proof}

We conclude the section with an easy remark on the behaviour of $\LV=\LV_\R$ on manifolds which satisfy the \emph{uniform boundary condition} property. Following~\cite{Matsu-Mor},
we say that a topological space $X$ satisfies property $n$-UBC (over $R$) if the norms $\|\cdot \|_1$ and $\|\cdot\|_{\fil,R}$ on the space of degree-$n$ boundaries are Lipschitz
equivalent, i.e.~if
there exists $K> 0$ such that, for every boundary $b\in B_{n}(X,R)$, there exists
a chain $c\in C_{n+1}(X,R)$ such that $\partial c=b$ and $\|c\|_1\leq K\cdot \|b\|_1$.

\begin{prop}
	Suppose that $M$ satisfies $n$-UBC. Then $\LV (\varphi)=0$ for every $\varphi\in\mcg(M)$.
\end{prop}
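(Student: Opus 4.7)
The plan is to exploit the $n$-UBC hypothesis to get a uniform (in $m$) upper bound on $\|f^m_*(z)-z\|_{\fil,\R}$, so that the defining limit is forced to be zero.

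First, I would pick any representative $f\colon M\to M$ of $\varphi$ and any real fundamental cycle $z\in Z_n(M,\R)$. Since $f$ is an orientation preserving self-homotopy equivalence, $f_*[M]=[M]$ in $H_n(M,\R)$, so $f^m_*(z)$ represents $[M]$ as well; consequently $f^m_*(z)-z$ is a boundary in $B_n(M,\R)$ for every $m\in\N$.

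Next, I would apply $n$-UBC directly to this boundary: the hypothesis furnishes a constant $K>0$ such that
\[
\|f^m_*(z)-z\|_{\fil,\R}\;\leq\; K\cdot \|f^m_*(z)-z\|_1\;\leq\; K\bigl(\|f^m_*(z)\|_1+\|z\|_1\bigr).
\]
Since $f_*$ is $\ell^1$-norm non-increasing, $\|f^m_*(z)\|_1\leq \|z\|_1$, and therefore
\[
\|f^m_*(z)-z\|_{\fil,\R}\;\leq\; 2K\|z\|_1
\]
for every $m\in\N$, with the right-hand side independent of $m$.

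Dividing by $m$ and passing to the limit then gives
\[
\LV(\varphi)=\lim_{m\to\infty}\frac{\|f^m_*(z)-z\|_{\fil,\R}}{m}\;\leq\;\lim_{m\to\infty}\frac{2K\|z\|_1}{m}=0,
\]
which yields the conclusion. There is no real obstacle here: the whole point is that $n$-UBC trivialises the filling norm on boundaries of bounded $\ell^1$-norm, and the orbit $\{f^m_*(z)\}_{m\in\N}$ lies inside a ball of radius $\|z\|_1$ by the contractivity of $f_*$, so the defining sequence of $\LV(\varphi)$ is in fact bounded (not merely subadditive).
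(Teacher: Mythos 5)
Your argument is correct and is essentially identical to the paper's: both apply the $n$-UBC constant $K$ to the boundary $f^m_*(z)-z$, bound its $\ell^1$-norm by $2\|z\|_1$ using that $f_*$ is norm non-increasing, and conclude by dividing by $m$ and letting $m\to\infty$. No differences worth noting.
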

\begin{proof}
	If $f\colon M\to M$ is an orientation preserving self-homotopy equivalence, and $K> 0$ is as in the above definition of condition $n$-UBC, then
	\begin{align*}
		\LV(f) & =\lim_{m\to \infty} \frac{ \|f^m_*(z)-z\|_{\fil}}{m}\leq \lim_{m\to \infty} \frac{ K\cdot \|f^m_*(z)-z\|_{1}}{m} \\ &\leq
		\lim_{m\to \infty} \frac{ K\cdot \left(\|f^m_*(z)\|_1+\|z\|_{1}\right)}{m}\leq \lim_{m\to \infty} \frac{ 2K\cdot \|z\|_{1}}{m}=0\ .
	\end{align*}
\end{proof}

It  was shown in~\cite{Matsu-Mor} that, if $\pi_1(M)$ is amenable, then $M$ satisfies $n$-UBC over $\R$ for every $n\geq 1$. We then get the following:

\begin{cor}\label{amenable:cor}
	If $\pi_1(M)$ is amenable, then $\LV (\varphi)=0$ for every $\varphi\in\mcg(M)$.
\end{cor}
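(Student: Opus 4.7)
The statement is a direct corollary of the preceding proposition combined with an external input, so the plan is essentially a one-line invocation rather than a substantial argument.

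First I would observe that the preceding proposition already establishes the implication ``$M$ satisfies $n$-UBC over $\R$ $\Rightarrow$ $\LV(\varphi)=0$ for every $\varphi\in\mcg(M)$''. So to conclude it suffices to verify that the amenability hypothesis on $\pi_1(M)$ forces $M$ to satisfy $n$-UBC over $\R$, where $n=\dim M$.

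Next I would cite the Matsumoto--Morita result: it is proved in~\cite{Matsu-Mor} that if $\pi_1(X)$ is amenable, then $X$ satisfies $k$-UBC over $\R$ for every $k\geq 1$. Applying this with $X=M$ and $k=n$ yields that $M$ satisfies $n$-UBC over $\R$, and the conclusion follows from the previous proposition.

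There is no real obstacle here: the only nontrivial ingredient is the Matsumoto--Morita theorem, which is cited rather than reproved, and the chain of implications is fully provided by the preceding proposition. The brevity of the argument simply reflects that both steps have already been isolated in the paper.
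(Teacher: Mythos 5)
Your proposal is correct and is exactly the paper's argument: the authors also deduce the corollary by citing the Matsumoto--Morita theorem that amenable fundamental group implies $n$-UBC over $\R$, and then applying the preceding proposition. Nothing is missing.
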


Corollary~\ref{amenable:cor} may be easily deduced also from Theorem~\ref{main:thm}, together with the  fact that
the simplicial volume of any fiber bundle with an amenable fiber of positive dimension vanishes~\cite{Gromov,FriMoM,Loh-Sauer-ens}.

Generalizations and variations of the  uniform boundary condition (which involve also integral coefficients) are introduced and studied
in~\cite{Fauser-Loh}. We remark here that in Proposition~\ref{Anosov:prop} we will show that $\LV_\Z$ does \emph{not} vanish on many mapping classes
on the $2$-dimensional torus. Since the torus has an amenable fundamental group, this shows that Corollary~\ref{amenable:cor} does not hold if one replaces
$\LV_\R$ with $\LV_\Z$.

\section{Simplicial volume of mapping tori}\label{real:sec}
Throughout the whole section, we fix  a closed orientable $n$-dimensional manifold $M$.
In this section we prove  Theorem~\ref{main:thm}, which states that $\LV(f)=\LV_\R(f)$ is equal to the simplicial volume of the mapping torus of $f$, for every
orientation preserving self-homeomorphism $f$ of $M$.
We begin with the following lemma, which holds both for real and for integral coefficients:

\begin{lemma}\label{easy:estimate}
	Let $f\colon M\to M$ be a homeomorphism, and let $z$ be an $R$-fundamental cycle for $M$. Then
	\[
		\|M\rtimes_{f} S^1\|_R\leq  (n+1)\|z\|_1+ \|f_*(z)-z\|_{\fil,R}\ .
	\]
\end{lemma}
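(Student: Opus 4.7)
The plan is to build an explicit $R$-fundamental cycle for the mapping torus $E_f = M \rtimes_{f} S^1$ out of the given cycle $z$ and a near-optimal filling of $f_*(z)-z$; the $\ell^1$-estimate for this cycle will then directly give the claimed bound on $\|E_f\|_R$.

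The main ingredient is the classical prism operator $P \colon C_*(M,R) \to C_{*+1}(M \times [0,1], R)$. For a singular $k$-simplex $\sigma\colon \Delta^k \to M$, the chain $P(\sigma)$ is the signed sum of the $k+1$ standard $(k+1)$-simplices triangulating $\Delta^k \times [0,1]$, each precomposed with $\sigma \times \mathrm{id}$. Two properties will be used: $\|P(c)\|_1 \leq (k+1)\|c\|_1$ for every $c \in C_k(M,R)$, and, denoting by $i_t \colon M \hookrightarrow M \times [0,1]$ the inclusion at height $t$,
\[
\partial P(c) = (i_1)_*(c) - (i_0)_*(c) - P(\partial c).
\]

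Given $\varepsilon > 0$, choose $b \in C_{n+1}(M,R)$ with $\partial b = f_*(z) - z$ and $\|b\|_1 \leq \|f_*(z) - z\|_{\fil,R} + \varepsilon$. Let $q \colon M \times [0,1] \to E_f$ be the quotient map. By construction of the mapping torus $(x,0)\sim (f(x),1)$, so $q \circ i_0 = q \circ i_1 \circ f$. Using $\partial z = 0$, we compute
\[
\partial q_* P(z) = q_*(i_1)_*(z) - q_*(i_0)_*(z) = q_*(i_1)_*(z - f_*(z)) = -\partial (q \circ i_1)_*(b).
\]
Hence the chain $\widehat{z} := q_* P(z) + (q \circ i_1)_*(b) \in C_{n+1}(E_f, R)$ is a cycle, and
\[
\|\widehat{z}\|_1 \leq \|P(z)\|_1 + \|b\|_1 \leq (n+1)\|z\|_1 + \|f_*(z) - z\|_{\fil,R} + \varepsilon.
\]

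The final step is to verify $[\widehat z] = [E_f]_R$; letting $\varepsilon \to 0$ will then conclude. Let $F = (q \circ i_0)(M) = (q \circ i_1)(M) \subset E_f$, a closed orientable $n$-submanifold homeomorphic to $M$. Since $\dim F = n$, $H_{n+1}(F,R) = 0$, and the long exact sequence of the pair $(E_f, F)$ produces an injection $H_{n+1}(E_f, R) \hookrightarrow H_{n+1}(E_f, F; R)$. Because $(q \circ i_1)_*(b)$ lies in $C_{n+1}(F)$, the images of $[\widehat z]$ and $[q_* P(z)]$ in $H_{n+1}(E_f, F; R)$ coincide. Under the quotient-induced isomorphism $H_{n+1}(E_f, F; R) \cong H_{n+1}(M \times [0,1], M \times \{0,1\}; R)$, this image corresponds to $[P(z)]$, which is a generator since $P(z)$ is the prism over a fundamental cycle of $M$. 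The class $[E_f]_R$ also maps to this same generator under the injection, so $[\widehat z] = [E_f]_R$ by injectivity. The only slightly delicate step is this last identification of relative fundamental classes via the quotient; all other steps are bookkeeping with the prism operator and the chosen filling.
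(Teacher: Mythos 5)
Your proof is correct and follows essentially the same route as the paper: form the prism $P(z)$ over $z$, correct its boundary by a near-optimal filling of $f_*(z)-z$ pushed into the fiber at height $1$, and project to the mapping torus. The only difference is that you carefully verify that $\widehat z$ represents the fundamental class (via the long exact sequence of the pair $(E_f,F)$ and excision), a point the paper's proof simply asserts.
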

\begin{proof}
	For $j=0,1$, let $i_j\colon M\to M\times [0,1]$ be the inclusion defined by $i_j(x)=(x,j)$, and let $\pi\colon M\times [0,1]\to M\rtimes_{f} S^1$ be the quotient
	map which identifies $(x,0)$ with $(f(x),1)$ for every $x\in M$. The standard chain homotopy between $(i_0)_*$ and $(i_1)_*$ provides
	a chain  $\overline{z}\in C_{n+1}(M\times [0,1],R)$ such that $\partial \overline{z}=(i_0)_*(z)-(i_1)_*(z)$ and $\|\overline{z}\|_1\leq (n+1)\|z\|_1$. In order to project $\overline{z}$
	onto a fundamental cycle for $M\rtimes_{f} S^1$ we need to suitably replace the summand $(i_1)_*(z)$ with $(i_1\circ f)_* (z)$. To this aim, for any given $\varepsilon>0$
	we can choose a chain $b\in C_{n+1}(M,R)$ such that $\partial b= z - f_*(z)$ and $\|b\|_1\leq  \|z -f_*(z)\|_{\fil,R}+\varepsilon$, and we  set $\widehat{z}=\overline{z}+(i_1)_*(b)\in C_{n+1}(M\times [0,1])$. By construction we have
	\[
		\partial \widehat{z}=(i_0)_*(z)-(i_1\circ f)_* (z)\, ,
	\]
	hence $\pi_*(\widehat{z})$ is an $R$-fundamental cycle for $M\rtimes_{f} S^1$. We thus get
	\begin{align*}
		\|M\rtimes_{f} S^1\|_R & \leq
		\|\widehat{z}\|_1 \leq \|\overline{z}\|_1+\|(i_1)_*(b)\|_1 \\ &\leq (n+1)\|z\|_1+\|z-f_*(z)\|_{\fil,R}+\varepsilon\ .
	\end{align*}
	The conclusion follows thanks to the arbitrariness of $\varepsilon$.
\end{proof}

We now restrict our attention to the case with real coefficients, and  prove the main result of this section:

\begin{varthm}{main:thm}
	Let $M$ be a closed orientable manifold, and let $f\colon M\to M$ be an orientation preserving homeomorphism. Then
	\[
		\LV (f)=\|M\rtimes_{f} S^1\|\ .
	\]
\end{varthm}
\begin{proof}
	We begin by proving the inequality
	\begin{equation}\label{easy:in}
		\|M\rtimes_f S^1\| \leq \LV(f)\ .
	\end{equation}
	Let $z$ be any fundamental cycle for $M$, and let $m\in\mathbb{N}$.
	By applying Lemma~\ref{easy:estimate} to the homeomorphism $f^m$
	we get
	\begin{equation*}
		\|M\rtimes_{f^m} S^1\|\leq (n+1)\|z\|_1+ \|f^m_*(z)-z\|_\fil\ .
	\end{equation*}
	Observe now that $M\rtimes_{f^m} S^1$ is the total space of a degree-$m$ covering of $M\rtimes_f S^1$. Since the simplicial volume is multiplicative with respect to finite coverings, this yields
	\[
		\|M\rtimes_{f} S^1\|=\frac{\|M\rtimes_{f^m} S^1\|}{m}\leq \frac{(n+1)\|z\|_1}{m}+\frac{\|f^m_*(z)-z\|_\fil}{m}\ .
	\]
	By taking the limit in this inequality as $m\to +\infty$, we then get inequality~\eqref{easy:in}.

	In order to conclude, we are now left to show the inequality
	\[\|M \rtimes_f S^1\| \geq \LV(f)\ ,\]
	which requires more work.

	Let
	\[E = M \rtimes_{f} S^{1} = \frac{M \times [0,1]}{(f(x),1)\sim(x,0)}\] be the mapping torus of $f$. For every $m\in\mathbb{N}$,
	we consider the degree-$m$ cyclic cover $\pi_{m}\colon E_{m} \to E$ of $E$ and the infinite cyclic cover $\pi\colon\widetilde{E} \to E$ with total spaces
	\[ E_{m} = M \rtimes_{f^m} S^1 = \frac{ M \times [0,m] } {(f^{m} (x) ,m) \sim(x,0)}\]
	\[\widetilde E = M \times \R. \]
	For every fundamental cycle $z$ of $E$,  and for $m$ big enough, we will construct a relative fundamental cycle $w$ of $(M \times [0,m],M \times \{0,m \})$ such that $w$ projects
	onto a fundamental cycle of $E_{m}$, $ \|w \|_1 \leq m \cdot \|z \|_1$ and  $\partial w = b - f^{m}_* (b)$ for some fundamental cycle $b$ of $M$.

	Let us fix a fundamental cycle of $E$
	\[z = \sum_{i = 1}^{s} x_{i} \sigma_{i}	 \ .\]

	We define the \emph{length} of a singular simplex $\sigma$  in $E$ (or in $\widetilde{E}_m$) as follows:
	we consider a lift $\tilde \sigma$ of $\sigma$ in $\widetilde E=M\times\R$ and project its image on $\R$, thus obtaining an interval $[a_{\tilde\sigma},b_{\tilde\sigma}]$;
	we then set $\operatorname{length} (\sigma)= b_{\tilde\sigma}- a_{\tilde\sigma} $ (this definition does not depend on the choice of the lift
	$\tilde \sigma$). We finally denote by $N$ the maximal length of simplices appearing in $z$, i.e.~we set
	\[N=\max \{\operatorname{length}(\sigma_{i})\, |\, 1 \leq i \leq s \}.\]

	Let now $z_{m}$ be a collection of lifts of $z$ on $E_m$, so that $z_m$ is a fundamental cycle of $E_{m}$ satisfying $(\pi_{m})_{*} (z_{m})= m \cdot z $ and $\|z_{m} \|_1= m \|z \|_1$.
	Observe that every simplex appearing in $z_m$ has length not bigger than $N$.

	Let us fix a natural number $m > 2N + 1$, and consider a continuous map $h\colon [0,m]\to [0,m]$ such that
	$h([0,N])=\{0\}$ and $h([m-N,m])=\{m\}$.
	We extend $h$ by $m$-periodicity to a map (still denoted by $h$) defined on the whole real line. By construction,
	for every $k\in\mathbb{Z}$ the map
	$h$ shrinks the interval $[km-N,km+N]$ onto the single value $km$.
	We denote by $\overline{g}\colon M\times \R\to M\times \R$
	the map defined by $\overline{g}(x,t)=(x,h(t))$, so that
	$\overline{g}(M\times [km-N,km+N])=M\times \{km\}$ for every $k\in\mathbb{Z}$.

	Being equivariant with respect to the action of the  automorphisms of the covering
	$\widetilde{E}\to E_m$, the map $\overline{g}$ induces
	a continuous map $g\colon E_m\to E_m$. Since $\overline{g}$ is equivariantly homotopic to the identity of $\widetilde{E}$,
	the map $g$ is homotopic to the identity of $E_m$, hence the chain $g_*(z_m)$ is still a fundamental cycle for $E_m$.
	Moreover $\|g_*(z_m)\|_1\leq \|z_m\|_1$.


	Let us now denote by $\widetilde{z}$ the chain obtained by lifting $z$ (or $z_m$) to $\widetilde{E}$. Since $z$ is a fundamental cycle for $E$, the chain
	$\widetilde{z}$ is a locally finite (but infinite) fundamental cycle for $\widetilde{E}$ (see e.g.~\cite{Lothesis} for the definition of fundamental cycle
	of a non-compact manifold).
	Let $\widetilde{s}$ be
	a singular simplex appearing in $\widetilde{z}$ (i.e.~$\widetilde{s}$ is
	a lift of a simplex appearing in $z_m$
	under the covering $\widetilde{E}\to E_m$).

	Suppose first that  $\supp (\widetilde{s})\cap (M\times \{km\})\neq \emptyset$ for some $k\in\mathbb{Z}$. Since the length
	of any simplex appearing in $z_m$ is not bigger then $N$, the image of $\widetilde{s}$ is contained
	in $M\times [km-N,km+N]$, hence the image of $\overline{g}_*(\widetilde{s})$ is contained in $M\times \{km\}$.

	On the other hand, if $\supp  (\widetilde{s})\cap (M\times \{km\})= \emptyset$ for every $k\in\mathbb{Z}$, then there exists $k_0\in \mathbb{Z}$
	such that $\supp  (\widetilde{s})\subseteq M\times (k_0m,(k_0+1)m)$.
	Since $h$ maps the open interval $(k_0m,(k_0+1)m)$ onto the closed interval $[k_0m,(k_0+1)m]$, this implies
	that $\supp (\overline{g}_*(\widetilde s))\subseteq M\times [k_0m,(k_0+1)m]$.

	In any case, the image of any singular simplex appearing in
	$\overline{g}_*(\widetilde{z})$ is contained in a subset of $\widetilde{E}$ of the form $M\times [k_0m, (k_0+1)m]$ for some $k_0\in\mathbb{Z}$.
	Also observe that $\overline{g}_*(\widetilde{z})$ obviously coincides with the lift to $\widetilde{E}$ of the fundamental cycle $g_*(z_m)$ of $E_m$.

	Let us write $g_{*} (z_{m})$ as a linear combination of simplices:
	\[g_{*} (z_{m})=\sum_{i = 1}^{m s} y_{i} s_i\ .\]
	The idea is ``to cut'' $g_{*} (z_{m})$ in order to obtain a relative fundamental cycle $w$ of $(M \times [0,m],M \times \{0,m \})$.
	For every $i=1,\dots,m s$, we denote by $\widetilde{s}_i\colon \Delta^{n+1}\to \widetilde{E}=M\times \R$ the unique
	lift of $s_i$ such that the following condition holds: the image of $\widetilde{s}_i$ is disjoint from $M\times (-\infty,0)$, but is not
	disjoint from $M\times [0,m)$. Since every singular simplex appearing in
	$\overline{g}_*(\widetilde{z})$ is contained in a subset of $\widetilde{E}$ of the form $M\times [k_0m, (k_0+1)m]$, we have
	$\supp(\widetilde{s}_i)\subseteq M\times [0,m]$ for every $i=1,\dots,ms$.

	Let us now consider the chain
	\[w = \sum_{i=1}^{m  s} y_i \tilde{s}_i\ .\]
	By construction we have
	\[\|w\|_1=\|g_*(z_m)\|_1\leq \|z_m\|_1=m\|z\|_1\ .\]
	We are going to show that $w$
	is indeed a relative fundamental cycle for $(M \times [0,m] , M \times \{0,m \})$.

	Let $\tau\colon \widetilde{E}\to \widetilde{E}$ be the generator of the group of the automorphisms of the covering $\widetilde{E}\to E_m$, i.e.~let
	$\tau(x,t)=(f^m(x),t+m)$ for every $(x,t)\in M\times\R=\widetilde{E}$. By construction, we have
	\[
		\overline{g}_*(\widetilde{z})=\sum_{j\in\mathbb{Z}} \tau^j_*(w)\ .
	\]
	Moreover, $\supp(w)\subseteq M\times [0,m]$, hence $\supp (\partial  \tau^j_*(w))\subseteq M\times [jm,(j+1)m]$.
	Since $\partial \overline{g}_*(\widetilde{z})=0$, this readily implies that $\supp(\partial w)\subseteq (M\times\{0\})\cup(M\times \{m\})$, i.e.
	$w$ is a relative cycle for the pair $(M\times [0,m],\partial (M\times [0,m]))$.

	More precisely, we have $\partial \tau^j_*(w)=\tau^j_*(b^+)-\tau^j_*(b^-)$, where $b^+$ is supported in $M\times \{m\}$ and $b_j^-$ is supported
	in $M\times \{0\}$.
	Of course we have $\supp(\tau^j_*(b^+))=\tau^j(\supp(b^+))\subseteq M\times \{(j+1)m\}$, and $\supp(\tau^j_*(b^-))=\tau^j(\supp(b^-))\subseteq M\times \{jm\}$.
	From the condition $\partial \widetilde{z}=0$ we then deduce that
	\begin{equation}\label{bordi:eq}
		b^+=\tau_*(b^-)\ .
	\end{equation}

	Let us now prove that $w$ is a fundamental cycle for the pair $(M\times [0,m],\partial (M\times [0,m]))$.
	To this aim, let us pick a point $q\in M\times (0,m)$.
	Being a fundamental cycle for $E_m$, the chain $g_*(z_m)$ represents the positive generator of
	$H_{n+1}({E}_m, {E}_m\setminus \{p_m(q)\})\cong \mathbb{Z}$.
	Using that
	the covering projection $p_m\colon \widetilde{E}\to E_m$ is an orientation preserving local homeomorphism, this implies that
	$\overline{g}_*(\widetilde{z})$ is a representative of the positive generator of $H_{n+1}(\widetilde{E}, \widetilde{E}\setminus \{q\})$. But  the simplices
	appearing in $\overline{g}_*(\widetilde{z})-w$ are all supported outside $M\times (0,m)$, hence
	$[w]=[\overline{g}_*(\widetilde{z})]$ in $H_{n+1}(\widetilde{E}, \widetilde{E}\setminus \{q\})$, and this finally implies that $w$ is a fundamental cycle for $(M\times [0,m], \partial (M\times [0,m])$.

	As a consequence, $b^-$ is a fundamental cycle for $M\times\{0\}$.
	If $k\colon M\times [0,m]\to M$ is the projection onto the first factor and $b=k_*(b^-)$, then $b$ is a fundamental cycle for $M$, and
	Equation~\eqref{bordi:eq} implies that $\partial k_*(w)=f^m_*(b)-b$. We thus get
	\[
		\frac{\|f^m_*(b)-b\|_\fil}{m}\leq \frac{ \|k_*(w)\|_1}{m}\leq \frac{\|w\|_1}{m}\leq \|z\|_1\ .
	\]

	Let us summarize what we have proven so far: for any given fundamental cycle $z$ for $M\rtimes_f S^1$, we have constructed
	a fundamental cycle $b$ for $M$ and a natural number $m\in\mathbb{N}$ such that
	\[
		\frac{\|f^m_*(b)-b\|_\fil}{m}\leq \|z\|_1\ .
	\]
	By the last statement of Proposition~\ref{initial:prop}, this implies that $\LV(f)\leq \|M\rtimes_f S^1\|$, as desired.
\end{proof}

A consequence of the previous result is that our length function is non trivial in every dimension different from $1$ and $3$.

\begin{varthm}{pari}
	Let $n$ be a positive integer, $n\neq 1,3$. Then, there exist a closed orientable $n$-manifold $M$ and a class $\varphi\in \mcg(M)$ such that
	$\LV(\varphi)>0$.
\end{varthm}

\begin{proof}
	Thanks to Theorem \ref{main:thm}, it suffices to find examples of $n$-dimensional mapping tori with positive simplicial volume for every $n \neq 2,\,4$. Such examples are given in \cite[Corollary 1.4]{Michelletori}.
\end{proof}

The examples of mapping tori with non-vanishing simplicial volume described in~\cite[Corollary 1.4]{Michelletori} are obtained by taking the product of $3$-dimensional mapping
tori with arbitrary manifolds with positive simplicial volume.

The $3$-dimensional case  is well known: indeed, for every hyperbolic surface $\Sigma$ and every pseudo Anosov map $f \colon \Sigma \to \Sigma$, the mapping torus $\Sigma \rtimes_f S^1$ is hyperbolic, hence it has positive simplicial volume.

Other interesting examples have been recently discovered in any odd dimension $n\geq 5$.
For $n = 5$, one can consider the mapping torus $V$ constructed in \cite{fujiwara} starting from the finite volume (noncompact) hyperbolic manifold described in \cite{brunoeco}.
The closed $5$-manifold $V$ has positive simplicial volume: indeed, $V$ is aspherical (being nonpositively curved) and $\pi_1(V)$ is relatively hyperbolic, hence $\|V\|>0$ by \cite[Proposition 1.6]{belegradek}.
As an alternative, in order to show that $\|V\|>0$ one can apply \cite[Corollary 1.6]{connell}, since $V$ has nonpositive sectional curvature and there exists a point $p \in V$ so that the sectional curvature along each tangent plane at $p$ is negative.


Finally, we refer the reader to~\cite[Corollary B]{fiberS} for other interesting constructions in odd dimensions $n\geq 7$.

\section{On the invariant $\LV_\Z$}\label{integral:sec}
As mentioned in the introduction, the interpretation of the invariant $\LV_\Z$ seems to be less clear than for its real counterpart. We first prove the following proposition,
which relates $\LV_\Z(f)$ with the stable integral simplicial volume of the mapping torus of $f$.

\begin{prop}\label{stable:prop}
	Let $M$ be a closed orientable manifold, and let $f\colon M\to M$ be a homeomorphism. Then
	\[
		\LV_\Z(f)\geq \|M\rtimes_{f} S^1\|_\Z^{\infty}\ .
	\]
\end{prop}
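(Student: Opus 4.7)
The plan is to mimic the easy direction of the proof of Theorem~\ref{main:thm}, replacing the multiplicativity of the classical simplicial volume under finite coverings by the mere definition of the stable integral simplicial volume, and observing that Lemma~\ref{easy:estimate} is already stated for arbitrary coefficients $R$, in particular for $R=\Z$.

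More precisely, I would fix once and for all an integral fundamental cycle $z\in C_n(M,\Z)$ of $M$, and for every $m\in\N$ I would apply Lemma~\ref{easy:estimate} to the homeomorphism $f^m$ (whose mapping torus is $M\rtimes_{f^m}S^1$), obtaining the integral estimate
\[
	\|M\rtimes_{f^m} S^1\|_\Z \;\leq\; (n+1)\|z\|_1 + \|f^m_*(z)-z\|_{\fil,\Z}\ .
\]
The second key observation is that, exactly as in the real case, $M\rtimes_{f^m}S^1$ is the total space of a degree-$m$ cyclic covering of $M\rtimes_f S^1$. This time one cannot invoke multiplicativity of $\|\cdot\|_\Z$ under finite coverings (which famously fails for integral coefficients), but the inequality
\[
	\|M\rtimes_f S^1\|_\Z^{\infty}\;\leq\;\frac{\|M\rtimes_{f^m} S^1\|_\Z}{m}
\]
holds directly from the definition of stable integral simplicial volume as an infimum over finite covers.

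Combining the two inequalities yields
\[
	\|M\rtimes_f S^1\|_\Z^{\infty}\;\leq\;\frac{(n+1)\|z\|_1}{m}+\frac{\|f^m_*(z)-z\|_{\fil,\Z}}{m}
\]
for every $m\in\N$. Letting $m\to+\infty$, the first summand on the right-hand side vanishes while the second converges to $\LV_\Z(f)$ by definition, giving the desired bound $\|M\rtimes_f S^1\|_\Z^{\infty}\leq \LV_\Z(f)$.

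Since both of the ingredients (Lemma~\ref{easy:estimate} with integral coefficients, and the definition of stable integral simplicial volume as infimum over cyclic covers) are already available, there is no real obstacle; the only subtlety to flag is that one must use the bare definition of $\|\cdot\|_\Z^{\infty}$ rather than any multiplicativity statement, as the latter is exactly what breaks down for integral coefficients and is, in some sense, what makes the second half of Theorem~\ref{mainZ:thm} (namely, strictness of the inequality) possible.
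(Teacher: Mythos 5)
Your argument is correct and coincides with the paper's own proof: both apply Lemma~\ref{easy:estimate} with $R=\Z$ to $f^m$, use the defining inequality $\|M\rtimes_f S^1\|_\Z^{\infty}\leq \|M\rtimes_{f^m}S^1\|_\Z/m$ for the degree-$m$ cyclic cover, and let $m\to\infty$. Your remark that only the definition of the stable integral simplicial volume (and not any multiplicativity) is needed is exactly the right point.
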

\begin{proof}
	For every $m\in\mathbb{N}$, the mapping torus $M\rtimes_{f^m} S^1$ is the total space of a degree-$m$ cover of $M\rtimes_{f} S^1$, hence by definition
	of stable integral simplicial volume we have
	\begin{equation*}
		\|M\rtimes_{f} S^1\|_\Z^{\infty}\leq \frac{ \|M\rtimes_{f^m} S^1\|_\Z}{m}\ .
	\end{equation*}
	Let now $z$ be an integral fundamental cycle for $M$. By applying Lemma~\ref{easy:estimate} to the homeomorphism $f^m\colon M\to M$ and dividing by $m$ we get
	\[
		\frac{ \|M\rtimes_{f^m} S^1\|_\Z}{m}\leq  \frac{(n+1)\|z\|_1}{m}+\frac{ \|f^m_*(z)-z\|_{\fil,\Z}}{m}\ ,
	\]
	hence
	\[
		\|M\rtimes_{f} S^1\|_\Z^{\infty}\leq \frac{(n+1)\|z\|_1}{m}+\frac{ \|f^m_*(z)-z\|_{\fil,\Z}}{m}\ .
	\]
	The conclusion follows by taking the limit of this inequality as $m\to +\infty$.
\end{proof}
As mentioned in the introduction, in order to show that the converse inequality does not hold in general we first bound from below the weightless simplicial volume of hyperbolic manifolds:

\begin{varprop}{hyp:easy:prop}
	Let $M$ be an $n$-dimensional closed orientable hyperbolic manifold, and let $R=\R,\Z$ (in fact, $R$ could be any principal ideal domain). Then
	\[
		\|M\|_{(R)}\geq \|M\|=\frac{\vol(M)}{v_n}\ .
	\]
\end{varprop}
\begin{proof}
	Let $\sum_{i=1}^k a_i\sigma_i$ be an $R$-fundamental cycle for $M$ realizing the weightless simplicial volume of $M$, i.e.~assume that $k=\|M\|_{(R)}$.
	The straightening operator on singular simplices (see e.g.~\cite[Chapter 8]{frigerio:book}) induces a chain
	map $S_*\colon C_*(M,R)\to C_*(M,R)$ which is homotopic to the identity, hence the cycle
	$z=\sum_{i=1}^k a_i S_*(\sigma_i)$ is still an $R$-fundamental cycle for $M$.

	Let us now denote by $\supp(S_*(\sigma_i))$ the image of $S_*(\sigma_i)$ in $M$. We observe that  $\bigcup_{i=1}^k \supp(S_*(\sigma_i))=M$. Indeed, otherwise $z$
	would be supported in $M\setminus \{p\}$ for some point $p\in M$, hence its class $[z]\in H_n(M,R)$ would lie in the image of the map
	$H_n(M\setminus \{p\},R)\to H_n(M,R)$ induced by the inclusion. But $H_n(M\setminus \{p\},R)=0$, hence $[z]$ would be the zero class, against the hypothesis that $z$ is an $R$-fundamental cycle for $M$.

	Now the conclusion follows from an easy computation involving the volume of straight simplices:
	by definition, $S_*(\sigma_i)$ is the projection  in $M$ (via a locally isometric map) of a geodesic simplex in the hyperbolic $n$-space,
	and the volume of every geodesic simplex in the hyperbolic $n$-space is bounded above by $v_n$~\cite{HM,Pe}, hence
	\[
		\vol(M)=\vol\left( \bigcup_{i=1}^k \supp(S_*(\sigma_i))\right)\leq \sum_{i=1}^k \vol (\supp(S_*(\sigma_i)))\leq kv_n\ ,
	\]
	which implies
	\[
		\|M\|_{(R)}=k\geq \frac{\vol(M)}{v_n}=\|M\|\ .
	\]
\end{proof}

We are now ready to prove that $\LV_\Z$ can be non-null on mapping classes on which $\LV_\R$ vanishes.
More precisely, we prove the following result, which in turn proves the second statement of Theorem~\ref{mainZ:thm} for $n \geq 3$:
\begin{thm}\label{ZvsR:thm2}
	Let $\Sigma$ be a hyperbolic surface, and $g\colon \Sigma\to\Sigma$ be a pseudo Anosov homeomorphism.
	Let $M=\Sigma\times X \times S^1 $, where $X$ is any closed orientable manifold (possibly a point),
	and set $f\colon M\to M$, $f(x,\alpha,\beta)=(g(x),\alpha,\beta)$. Then
	\[
		\|M\rtimes_f S^1\|^{\infty}_\Z=0\ ,\qquad \LV_\Z(f)> 0\ .
	\]
\end{thm}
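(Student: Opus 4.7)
The plan is to treat the two statements separately. For the vanishing of the stable integral simplicial volume, observe that since $f$ preserves the $X$- and $S^1$-factors of $M=\Sigma\times X\times S^1$, the mapping torus splits as
\[
M\rtimes_f S^1 \;=\; (\Sigma\rtimes_g S^1)\times X\times S^1.
\]
The last $S^1$-factor admits self-coverings of every degree $m$, producing self-coverings of $M\rtimes_f S^1$ whose total space is again $M\rtimes_f S^1$. Hence $\|M\rtimes_f S^1\|_\Z^\infty \leq \|M\rtimes_f S^1\|_\Z/m$ for every $m\in\N$, and letting $m\to\infty$ gives $\|M\rtimes_f S^1\|_\Z^\infty=0$.

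For the positivity of $\LV_\Z(f)$, set $Y_m=\Sigma\rtimes_{g^m} S^1$; this is a closed hyperbolic $3$-manifold with $\vol(Y_m)=m\cdot\vol(Y_1)>0$, and $M\rtimes_{f^m} S^1=Y_m\times X\times S^1$. Fix an integer fundamental cycle $z$ of $M$ and set $n=\dim M$. Applying Lemma~\ref{easy:estimate} to $f^m$ yields an integer fundamental cycle of $M\rtimes_{f^m} S^1$ whose $\ell^1$-norm---and hence (since integer chains have at most as many distinct simplices as their $\ell^1$-norm) whose number of distinct simplices---is at most $(n+1)\|z\|_1+\|f^m_*(z)-z\|_{\fil,\Z}$. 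Thus
\[
\|Y_m\times X\times S^1\|_{(\Z)}\;\leq\;(n+1)\|z\|_1+\|f^m_*(z)-z\|_{\fil,\Z},
\]
and it suffices to show that $\|Y_m\times X\times S^1\|_{(\Z)}$ grows at least linearly in $m$.

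I obtain the required lower bound by a partial-straightening refinement of Proposition~\ref{hyp:easy:prop}. Given an integer fundamental cycle $\sum a_i\sigma_i$ with $k$ distinct simplices, lift each $\sigma_i$ to the universal cover $\widetilde{Y_m}\times\widetilde{X}\times\R$ and replace its $\widetilde{Y_m}$-component by the corresponding geodesic straight simplex, leaving the other two components unchanged. The deck group acts on $\widetilde{Y_m}$ through $\pi_1(Y_m)$, so both the geodesic straightening and its straight-line homotopy to the identity are equivariant and descend to a chain operator $S'_*$ on $C_*(Y_m\times X\times S^1)$ chain-homotopic to the identity. Hence $S'_*(\sum a_i\sigma_i)$ is still a fundamental cycle, and as in the proof of Proposition~\ref{hyp:easy:prop} its supports must cover $Y_m\times X\times S^1$; projecting to $Y_m$ shows that the images of the partially straightened simplices cover $Y_m$. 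Each such image is the projection to $Y_m$ of the convex hull of $n+2$ points in $\mathbb{H}^3$, which by Caratheodory in the Klein model decomposes into at most $\binom{n+2}{4}$ geodesic tetrahedra of hyperbolic volume at most $v_3$. Hence $\vol(Y_m)\leq k\cdot\binom{n+2}{4}v_3$, giving $k\geq m\,\|Y_1\|/\binom{n+2}{4}$; combining with the previous inequality and letting $m\to\infty$ yields $\LV_\Z(f)\geq \|Y_1\|/\binom{n+2}{4}>0$. The main delicate point is the partial-straightening step: one must verify both that straightening only the hyperbolic factor yields a chain map chain-homotopic to the identity, and that the $Y_m$-projection of a high-dimensional straight simplex has hyperbolic volume uniformly bounded by a dimension-dependent multiple of $v_3$.
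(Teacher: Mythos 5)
Your proposal is correct, and while the first half (the degree-$m$ self-coverings coming from the $S^1$-factor of $M\rtimes_f S^1=(\Sigma\rtimes_g S^1)\times X\times S^1$) is just the standard argument the paper invokes as a known fact, your lower bound on $\LV_\Z(f)$ takes a genuinely different route. The paper first applies L\"oh's product inequality for the weightless simplicial volume, $\|N_m\times (X\times S^1)\|_{(\Z)}\geq \|N_m\|_{(\Z)}$, and only then straightens inside the $3$-manifold $N_m=Y_m$ via Proposition~\ref{hyp:easy:prop}; you instead straighten directly on the product, but only in the hyperbolic coordinate, and control the $Y_m$-volume of the projected $(n+1)$-dimensional straight simplices by a Carath\'eodory decomposition of the convex hull of $n+2$ points in the Klein model into at most $\binom{n+2}{4}$ geodesic tetrahedra. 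Your partial straightening is in effect a self-contained proof of the special case of the product inequality that is needed here, at the cost of a weaker constant ($\|N_1\|/\binom{n+2}{4}$ versus the paper's $\|N_1\|$), which is irrelevant for positivity. The two delicate points you flag are genuinely the ones to check, and both go through: the face-compatible geodesic prism homotopy in the $\mathbb{H}^3$-factor (constant in the other factors) is equivariant and yields the chain homotopy to the identity without needing the universal cover of the product to be contractible (which it need not be, since $X$ is not assumed aspherical — so do avoid the acyclic-models shortcut here), and the image of a straight simplex is the convex hull of its vertices, whose volume bound by $\binom{n+2}{4}v_3$ is exactly as you describe. The covering argument for the supports and the passage from $\ell^1$-norms to simplex counts for integral chains are also fine.
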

\begin{proof}
	For every $m\in\mathbb{N}$, let $N_m=\Sigma\rtimes_{g^m} S^1$ be the mapping torus of $g^m$. Since $g$ is pseudo Anosov, $N_m$  is a closed orientable hyperbolic $3$-manifold.
	Since $f^m$ acts as the identity on the factor $X \times S^1$, it is readily seen that the mapping torus $M\rtimes_{f^m} S^1$ splits as a product
	$M\rtimes_{f^m} S^1=N_m\times (X \times S^1)$. In particular, since  $\|V\times S^1\|_\Z^\infty=0$ for every closed orientable manifold $V$,  we have that
	\[\|M\rtimes_f S^1\|^{\infty}_\Z=\|N_1 \times X \times S^1\|^{\infty}_\Z= 0\ .\]

	Let us now prove that $\LV_\Z(f)>0$. Let us fix $m\in\mathbb{N}$ and an integral fundamental cycle $z$ for $M$. First observe that \cite[Proposition 2.10]{Loh:weight} implies that
	\begin{align*}
		\|M\rtimes_{f^m} S^1\|_\Z & =\|N_m \times (X\times S^1)\|_\Z                     \\
		                          & \geq \|N_m \times (X\times S^1)\|_{(\Z)}             \\
		                          & \geq \max \{\|N_m\|_{(\Z)}, \|X\times S^1\|_{(\Z)}\} \\
		                          & \geq \|N_m\|_{(\Z)}\ .
	\end{align*}
	By Proposition~\ref{hyp:easy:prop} we now have $\|N_m\|_{(\Z)}\geq \|N_m\|=m\cdot \|N_1\|$, where the last equality is due to the fact that $N_m$ is the total space of a degree-$m$
	cover of $N_1$. We thus get
	\[
		\|M\rtimes_{f^m} S^1\|_\Z\geq m \cdot \|N_1\|\ .
	\]
	Therefore, applying Lemma~\ref{easy:estimate} to the map $f^m$ we obtain
	\[
		\|f^m_*(z)-z\|_{\fil,\Z}\geq \|M\rtimes_{f^m} S^1\|_\Z-  (n+1)\|z\|_1 \geq m \cdot \|N_1\|-  (n+1)\|z\|_1\ ,
	\]
	hence
	\[
		\frac{  \|f^m_*(z)-z\|_{\fil,\Z}}{m}\geq \|N_1\|- \frac{(n+1)\|z\|_1}{m}\ .
	\]
	By taking the limit of this inequality as $m\to+\infty$ we conclude that
	\[
		\LV_\Z(f)\geq \|N_1\|>0\ ,
	\]
	as desired.
\end{proof}

In order to conclude the proof of Theorem~\ref{mainZ:thm} we now need to deal with the $2$-dimensional case, i.e.~we need to exhibit a closed orientable surface $\Sigma$
and a homeomorphism $f\colon \Sigma\to \Sigma$ such that $\LV_\Z(f)>0$, while $\|\Sigma\rtimes_f S^1\|_\Z^\infty=0$. To this aim, it suffices to set $\Sigma=T=S^1\times S^1$,
and let $f$ be any orientation preserving Anosov  self-homeomorphism of $T$. Indeed, in this case the mapping torus $T\rtimes_f S^1$ is a closed $3$-manifold supporting
a geometric structure modeled on Sol, hence  $\|T\rtimes_f S^1\|_\Z^\infty=0$ (see~\cite[Theorem~1]{fauser:stable}). Moreover, $\LV_\Z(f)>0$ thanks to the following:

\begin{prop}\label{Anosov:prop}
	Let $f \colon T \to T$ be an orientation preserving  Anosov self-homeomorphism of the torus. Then $\LV_\Z(f)>0$.
\end{prop}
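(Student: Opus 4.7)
The plan is to bootstrap a linear lower bound on the integer simplicial volume of the cyclic covers $T\rtimes_{f^m} S^1$ into a lower bound on $\LV_\Z(f)$, via Lemma~\ref{easy:estimate}. First, applying Lemma~\ref{easy:estimate} to the homeomorphism $f^m$ and to any integer fundamental cycle $z$ of $T$ produces
$$\|T \rtimes_{f^m} S^1\|_\Z \le 3\|z\|_1 + \|f^m_*(z)-z\|_{\fil,\Z};$$
dividing by $m$ and letting $m\to\infty$, one obtains
$$\LV_\Z(f) \ge \liminf_{m\to\infty} \frac{\|T \rtimes_{f^m} S^1\|_\Z}{m}.$$
Note that the weaker inequality $\LV_\Z(f)\ge \|T\rtimes_f S^1\|_\Z^\infty$ from Proposition~\ref{stable:prop} is of no use here, since Fauser's theorem recalled just before the statement gives $\|T\rtimes_f S^1\|_\Z^\infty = 0$. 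The point is therefore that the \emph{unstable} integer simplicial volumes of the particular cyclic covers $T\rtimes_{f^m} S^1$ should grow linearly in $m$, while the stable volume (an infimum over \emph{all} finite covers) vanishes.

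Since $f$ is Anosov, each $T\rtimes_{f^m} S^1$ is a closed orientable Sol $3$-manifold. The crux is to prove a linear lower bound $\|T\rtimes_{f^m} S^1\|_\Z \ge \alpha m$ for some $\alpha > 0$ and all large $m$. I would route the argument through the Matveev complexity $c(\cdot)$. On the one hand, for every closed orientable $3$-manifold $N$ there is an absolute constant $K$ such that $\|N\|_\Z \ge c(N)/K$: replicating each simplex of an efficient integer fundamental cycle according to its (absolute) coefficient produces a collection of $\|N\|_\Z$ signed singular tetrahedra whose algebraic boundary cancellations can, up to bounded combinatorial overhead, be realized by face identifications, yielding a singular (pseudo-)triangulation of $N$ that witnesses its Matveev complexity. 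On the other hand, explicit computations of complexities of Sol torus bundles --- precisely the topic of the authors' acknowledged conversations with Martelli --- show that $c(T\rtimes_{f^m} S^1)$ grows linearly in $m$, at rate essentially $\log|\tr(A^m)|$, where $A\in\mathrm{SL}(2,\Z)$ is the hyperbolic matrix representing $f$.

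Combining the two inputs gives $\|T\rtimes_{f^m} S^1\|_\Z \ge \alpha m$ for $m$ large, and hence $\LV_\Z(f) \ge \alpha > 0$. The main obstacle is the linear growth of $c(T\rtimes_{f^m} S^1)$ for Sol torus bundles: this reflects the intuitive fact that iterating a hyperbolic monodromy produces genuinely more complex manifolds, but a rigorous justification requires nontrivial input from the literature on complexity of geometric $3$-manifolds (hence the acknowledgment to Martelli). A secondary delicate point is the extraction of a bounded-overhead singular triangulation from an efficient integer fundamental cycle, needed to make the comparison $\|N\|_\Z \gtrsim c(N)$ precise.
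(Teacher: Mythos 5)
Your reduction is the right one and matches the paper's: applying Lemma~\ref{easy:estimate} to $f^m$ and dividing by $m$ gives $\LV_\Z(f)\ge\lim_{m\to\infty}\|T\rtimes_{f^m}S^1\|_\Z/m$, and you correctly observe that the stable integral simplicial volume is useless here, so everything hinges on a linear-in-$m$ lower bound for the \emph{unstabilized} integral simplicial volume of $E_m=T\rtimes_{f^m}S^1$. The gap is in the first of your two inputs: the inequality $\|N\|_\Z\ge c(N)/K$ for an absolute constant $K$ is not a known theorem, and your sketch of it does not go through. Gluing the tetrahedra of an integral fundamental cycle along the faces that cancel algebraically produces only a $3$-dimensional pseudo-complex equipped with a degree-one map to $N$, not a (singular) triangulation \emph{of $N$}; there is no bounded-overhead procedure converting such an object into a spine or triangulation of $N$ itself, and whether (stable) complexity is controlled from above by (stable) integral simplicial volume is essentially open --- the known comparison (Francaviglia--Frigerio--Martelli) goes in the opposite direction, $\|N\|\le c_\infty(N)$. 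Moreover, the available \emph{lower} bounds on the complexity of Sol torus bundles are themselves derived from torsion homology growth, so even if your comparison lemma held, the argument would circle back to that mechanism.

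The paper avoids the complexity detour by a direct homological estimate: by Sauer's theorem, any integral fundamental cycle of $E_m$ with $k$ simplices satisfies $k\ge(6\log 4)^{-1}\log|\operatorname{tors}H_1(E_m,\Z)|$, while Sakuma's lemma gives $|\operatorname{tors}H_1(E_m,\Z)|=\tr(f_*^m)-2=\lambda^m+\lambda^{-m}-2$, where $\lambda>1$ is the dominant eigenvalue of the induced map on $H_1(T,\Z)$ (after possibly replacing $f$ by $f^2$ to make $\lambda$ positive). Hence $\|E_m\|_\Z$ grows at least like $(6\log 4)^{-1}m\log\lambda$, and your reduction then yields $\LV_\Z(f)>0$. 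Substituting this pair of citations for your complexity lemma turns your outline into the paper's proof.
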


\begin{proof}
	Let $E_m=T \rtimes_{f^{m}} S^1$ denote the mapping torus relative to the homeomorphism $f^m\colon T\to T$. Since $f$ is an Anosov map, $f_*:H_1(T,\Z) \to H_1(T,\Z)$ is a matrix in $\mathrm{SL}_2(\Z)$ with eigenvalues $\lambda$ and $1/\lambda$, where $|\lambda|>1$; up to replacing $f^2$ with $f$, we can suppose $\lambda>1$. Thanks to ~\cite[Lemma~10]{sakuma}, we get
	\begin{equation}\label{tors}
		|\operatorname{tors} (H_1(E_m,\Z))| = \operatorname{tr} (f_*^m) -2 = \lambda^m +\lambda^{- m}-2 \ .
	\end{equation}
	If $c \in Z_3(E_m,\Z)$ is any integral fundamental cycle and $k$ is the number of $3$-simplices appearing in the reduced form of $c$, then ~\cite[Theorem~3.2]{sauer_volumegrowth} gives
	\[\|c \|_1 \geq k \geq (6\log(4))^{-1}{\log|\operatorname{tors} H_1(E_m,\Z)|} \ , \]
	hence
	\[
		\|E_m\|_\Z\geq (6\log(4))^{-1}{\log|\lambda^m +\lambda^{- m}-2|}\ .
	\]
	Now, if $z$ is an integral fundamental cycle of $T$, then Lemma \ref{easy:estimate} gives
	\begin{align*}
		\frac{\|f_*^m(z)- z\|_{\fil,\Z}}{m}\geq &
		\frac{\|E_m \|_\Z}{m}-\frac{3\|z \|_1}{m}                                                                                    \\
		\geq                                    & \frac{(6\log(4))^{-1}|\log(\lambda^m +\lambda^{- m}-2)|}{m}-\frac{3\|z \|_1}{m}\ .
	\end{align*}
	The result follows by taking the limit as $m\to \infty$.
\end{proof}

As stated in the introduction, Theorem~\ref{mainZ:thm} implies that for every $n \geq 2$ there exist an $n$-manifold $M$ and an element $[f]\in\mcg(M)$ such that $\LV_\R(f)=0$, while $\LV_\Z(f)\neq 0$.
As a consequence, if $z$ is an integral fundamental cycle for $M$ and $c_m=f^m_*(z)-z$, then
\[
	\lim_{m\to +\infty} \frac{ \|c_m\|_{\fil,\R}}{\|c_m\|_{\fil,\Z}}=\lim_{m\to +\infty} \frac{ \|c_m\|_{\fil,\R}/m}{\|c_m\|_{\fil,\Z}/m}=
	\frac{\LV_\R(f)}{\LV_\Z(f)}=0\ .
\]
This proves Corollary~\ref{filling:cor} from the introduction.

\bibliography{biblio_omeo}

\begin{thebibliography}{FLMQ21}

\bibitem[Ber]{Bertolotti}
F.~Bertolotti.
\newblock Self-homotopy equivalences of 3-manifolds.
\newblock arXiv:2207.05717.

\bibitem[BH13]{belegradek}
Igor Belegradek and G.~Christopher Hruska.
\newblock Hyperplane arrangements in negatively curved manifolds and relative
  hyperbolicity.
\newblock {\em Groups, Geometry, and Dynamics}, 7(1):13--38, 2013.

\bibitem[BN20]{Michelletori}
M.~Bucher and C.~Neofytidis.
\newblock The simplicial volume of mapping tori of $3$-manifolds.
\newblock {\em Math. Ann.}, 376:1429--1447, 2020.

\bibitem[CW20]{connell}
C.~Connell and S.~Wang.
\newblock Some remarks on the simplicial volume of nonpositively curved
  manifolds.
\newblock {\em Math. Ann.}, 377(3):969--987, 2020.

\bibitem[FL21]{Fauser-Loh}
D.~Fauser and C.~L{\"o}h.
\newblock Variations on the theme of the uniform boundary condition.
\newblock {\em J. Topol. Anal.}, 13:147--174, 2021.

\bibitem[FLMQ21]{fauser:stable}
D.~Fauser, C.~L\"oh, M.~Moraschini, and J.~P. Quintanilha.
\newblock Stable integral simplicial volume of 3-manifolds.
\newblock {\em J. Topol.}, 14(2):608--640, 2021.

\bibitem[FM]{FriMoM}
R.~Frigerio and M.~Moraschini.
\newblock Gromov's theory of multicomplexes with applications to bounded
  cohomology and simplicial volume.
\newblock arXiv:math/1808.07307, to appear in Mem. Amer. Math. Soc.

\bibitem[Fri17]{frigerio:book}
R.~Frigerio.
\newblock {\em Bounded cohomology of discrete groups}, volume 227 of {\em
  Mathematical Surveys and Monographs}.
\newblock American Mathematical Society, Providence, RI, 2017.

\bibitem[Fuj]{fujiwara}
Koji Fujiwara.
\newblock An example of a closed 5-manifold of nonpositive curvature that
  fibers over a circle.
\newblock arXiv:2009.13489, to appear in Groups Geom. Dyn.

\bibitem[Gro82]{Gromov}
M.~Gromov.
\newblock Volume and bounded cohomology.
\newblock {\em Inst. Hautes \'Etudes Sci. Publ. Math.}, 56:5--99, 1982.

\bibitem[Gro87]{Gromov_hyperbolic}
M.~Gromov.
\newblock Hyperbolic {Groups}.
\newblock In S.~M. Gersten, editor, {\em Essays in {Group} {Theory}}, pages
  75--263. Springer New York, New York, NY, 1987.

\bibitem[HM81]{HM}
U.~Haagerup and H.~J. Munkholm.
\newblock Simplices of maximal volume in hyperbolic $n$-space.
\newblock {\em Acta Math.}, 147:1--11, 1981.

\bibitem[IMM22]{brunoeco}
Giovanni Italiano, Bruno Martelli, and Matteo Migliorini.
\newblock Hyperbolic 5-manifolds that fiber over $s^1$.
\newblock {\em Inventiones mathematicae}, July 2022.
\newblock https://doi.org/10.1007/s00222-022-01141-w.

\bibitem[KKT09]{koji1}
E.~Kin, S.~Kojima, and M.~Takasawa.
\newblock Entropy versus volume for pseudo-{A}nosovs.
\newblock {\em Experiment. Math.}, 18:397--407, 2009.

\bibitem[KM18]{koji3}
S.~Kojima and G.~McShane.
\newblock Normalized entropy versus volume for pseudo-{A}nosovs.
\newblock {\em Geom. Topol.}, 22:2403--2426, 2018.

\bibitem[Koj12]{koji2}
S.~Kojima.
\newblock Entropy, {W}eil-{P}etersson translation distance and {G}romov norm
  for surface automorphisms.
\newblock {\em Proc. Amer. Math. Soc.}, 140:3993--4002, 2012.

\bibitem[KR]{fiberS}
T.~Kastenholz and J.~Reinhold.
\newblock Essentiality and simplicial volume of manifolds fibered over spheres.
\newblock arXiv:math/2107.05892.

\bibitem[LM]{Manin}
X.~Li and F.~Manin.
\newblock Homological filling functions with coefficients.
\newblock arXiv:2009.13489, to appear in Groups Geom. Dyn.

\bibitem[L{\"o}h07]{Lothesis}
C.~L{\"o}h.
\newblock {\em Homology and simplicial volume}.
\newblock PhD thesis, WWU M{\"u}nster, 2007.
\newblock available online at
  http://nbn-resolving.de/urn:nbn:de:hbz:6-37549578216.

\bibitem[L{\"o}h19]{Loh:weight}
C.~L{\"o}h.
\newblock Simplicial volume with $\mathbb{F}_p$-coefficients.
\newblock {\em Period. Math. Hungar.}, 80:38--58, 2019.

\bibitem[LP]{lackenbypurcell}
M.~Lackenby and J.~Purcell.
\newblock The triangulation complexity of fibred 3-manifolds.
\newblock arXiv:math/1910.10914, to appear in Geom. Topol.

\bibitem[LS20]{Loh-Sauer-ens}
C.~L{\"o}h and R.~Sauer.
\newblock Bounded cohomology of amenable covers via classifying spaces.
\newblock {\em Enseign. Math.}, 66:151--172, 2020.

\bibitem[MM85]{Matsu-Mor}
S.~Matsumoto and S.~Morita.
\newblock Bounded cohomology of certain groups of homeomorphisms.
\newblock {\em Proc. Amer. Math. Soc.}, 94:539--544, 1985.

\bibitem[Pey02]{Pe}
N.~Peyerimhoff.
\newblock Simplices of maximal volume or minimal total edge length in
  hyperbolic space.
\newblock {\em J. London Math. Soc.}, 66:753--768, 2002.

\bibitem[Pol18]{poly}
D.~H.~J. Polymath.
\newblock Homogeneous length functions on groups. paper published under group
  name ``{D}. {H}. {J}. {P}olymath''.
\newblock {\em Algebra Number Theory}, 12:1773--1786, 2018.

\bibitem[Sak81]{sakuma}
M.~Sakuma.
\newblock Surface {Bundles} over {$S^1$} {Which} {Are} $2$-fold {Branched}
  {Cyclic} {Coverings} of {$S^3$}.
\newblock {\em Math. Sem. Notes Kobe Univ.}, 9:159--180, 1981.

\bibitem[Sau16]{sauer_volumegrowth}
R.~Sauer.
\newblock Volume and homology growth of aspherical manifolds.
\newblock {\em Geometry \& Topology}, 20(2):1035--1059, January 2016.
\newblock Publisher: MSP.

\bibitem[Ye]{Ye}
S.~Ye.
\newblock Length functions on groups and rigidity.
\newblock arXiv:math/2101.08902.

\end{thebibliography}
\bibliographystyle{alpha}
\end{document}